\documentclass[microtype]{gtpart}
\usepackage{pinlabel}

\usepackage[latin1]{inputenc}
\usepackage{amsmath, amssymb, amstext, amsfonts, amsbsy, amsthm, fancyhdr, multicol,amscd}
\usepackage{graphicx} 
\usepackage{amsfonts, amscd, amssymb}
\usepackage[all]{xy}
\usepackage[mathscr]{eucal}
\usepackage{amsthm,amsmath,amssymb,amscd} 
\usepackage{amsmath,amsthm,amssymb,graphicx}
\usepackage{epsfig}
\usepackage{enumerate}
\usepackage{multirow}
\usepackage{color}
\usepackage[alphabetic]{amsrefs}
\usepackage{float}




\setlength{\textheight}{680pt} \setlength{\topmargin}{-50pt}
 \setlength{\textwidth}{500pt}
\setlength{\evensidemargin}{-10pt}
\setlength{\oddsidemargin}{-10pt}

\newcommand{\N}{\ensuremath{\mathbb{N}}}
\newtheorem {theo} {Theorem}

\newtheorem {coro} [theo] {Corollary}

\newtheorem {obs} {Remark}

\def\0{{\bf 0}}

\def\N{\mathbb{N}}

\def\C{\mathbb{C}}

\newtheorem{theorem}{Theorem}[section]
\newtheorem{proposition}[theorem]{Proposition}

\newtheorem{lemma}[theorem]{Lemma}

\newtheorem{remark}[theorem]{Remark}

\newtheorem{definition}[theorem]{Definition}

\newtheorem{construction}[theorem]{Construction}

\begin{document}

\title{Geometric Schottky groups and non-compact hyperbolic surfaces with infinite genus}

\author[John A. Arredondo]{John A. Arredondo}
\givenname{John A.}
\surname{Arredondo}
\email{alexander.arredondo@konradlorenz.edu.co}

\address{John A. Arredondo\\\newline Fundaci\'on Universitaria Konrad Lorenz\\\newline CP. 110231, Bogot\'a, Colombia.\\\newline Camilo Ram\'irez Maluendas\\\newline Universidad Nacional de Colombia, Sede Manizales\\\newline Manizales, Colombia.}

\author[Camilo Ram\'irez Maluendas]{Camilo Ram\'irez Maluendas}
\givenname{Camilo}
\surname{Ram\'irez Maluendas}
\email{camramirezma@unal.edu.co}

\subject{primary}{msc2000}{53}
\subject{secondary}{msc2000}{54}
\subject{secondary}{msc2000}{54.75}
\subject{secondary}{msc2000}{51M15}

\begin{abstract}

The topological type of a non-compact Riemann surface is determined by its ends space and the ends having infinite genus. In this paper  for a non-compact Riemann Surface $S_{m,s}$ with $s$ ends and exactly $m$ of them  with infinite genus, such that $m,s\in \N$ and $1<m\leq s$, we give a precise description of the infinite set of generators of a Fuchsian (geometric Schottky) group $\Gamma_{m,s}$ such that the quotient space $\mathbb{H}/ \Gamma_{m, s}$ is homeomorphic to $S_{m,s}$ and has infinite area. For this construction, we exhibit a hyperbolic polygon with an infinite number of sides and give a collection of Mobius transformations identifying the sides in pairs.

\end{abstract}

\maketitle

\textbf{keywords:} \emph{Non-compact surfaces; Hyperbolic surfaces; Schottky groups; Infinitely generated Fuchsian groups.}


\section{Introduction}\label{introduction}

It is well known that for a manifold $M$ and a properly discontinuous action of a group $\Gamma$, the quotient $M / \Gamma$ correspond to a manifold. In particular, every compact Riemann surface of genus $g$ grater than one, is homeomorphic to $\mathbb{H}/ \Gamma_g$, the space of orbits of the hyperbolic plane $\mathbb{H}$ under the action of some finitely generated subgroup $\Gamma_g$ of the automorphism group of $\mathbb{H}$, and the fundamental group of this compact Riemann surface $\mathbb{H}/ \Gamma_g$  is isomorphic to $\Gamma_g$. In this stage, the construction of the Riemann surface $\mathbb{H}/ \Gamma_g$  immediately puts in the map Fuchsian groups, and in particular, classical Schottky groups, in fact, every compact Riemann surface is the quotient of the Riemann sphere by a Schottky group (see \emph{e.g.}, \cite{Bear1}, \cite{Sch}). The previous discussion it is extended by the Uniformization Theorem (see \emph{e.g.} \cite{Abi}, \cite{Far}) to the world of  non-compact Riemann surfaces with infinite genus, in this case the group $\Gamma$ is still Fuchsian, more precisely, we are interested in study a type of them, called \textit{geometric Schottky groups}, which were recently introduced by Anna Zielicz \cite{Ziel}. These groups can be constructed in a similar way to a classical Schottky group but they could be infinitely generated.

Non-compact Riemann surfaces are mainly distinguished by the ends space and the genus associated in each end. Among these surfaces stand out the Infinite Loch Ness monster (up to homeomorphims, it is the surface with only one end and infinite genus) and the Jacob's ladder (up to homeomorphism, it is the only surface with two ends and each one having infinite genus) see \cite{PSul} and \cite{Ghys}, which are some of the usual examples in this field, in fact, in \cite{AyC} the authors give a precisely description of $\Gamma$ a subgroup of $PSL(2,\mathbb{Z})$ to hold the Infinite Loch Ness monster with hyperbolic structure as the quotient $\mathbb{H}/ \Gamma$. Motivated by this particularity, the various investigations on non-compact Riemann surfaces (see \emph{e.g.}, \cite{AVR}, \cite{LKTR}, \cite{MatKat}, \cite{ValRa} among others) and the characterization given by the Uniformization Theorem from the point of view of universal covers for Riemann surfaces, naturally arises the following inquiry: \textit{Given a non compact Riemann Surface $S_{m,s}$ with $s$ ends, and having exactly $m$ of them with infinite genus, such that $m,s\in \N$ and $1<m\leq s$ (see Figure \ref{msends}). Can one give a precise description of the set of generators of a Fuchsian group $\Gamma_{m,s}$, such that the quotient space $\mathbb{H}/ \Gamma_{m, s}$ is homeomorphic to $S_{m,s}$?}

\begin{figure}[h]
\begin{center}
\includegraphics[scale=0.28]{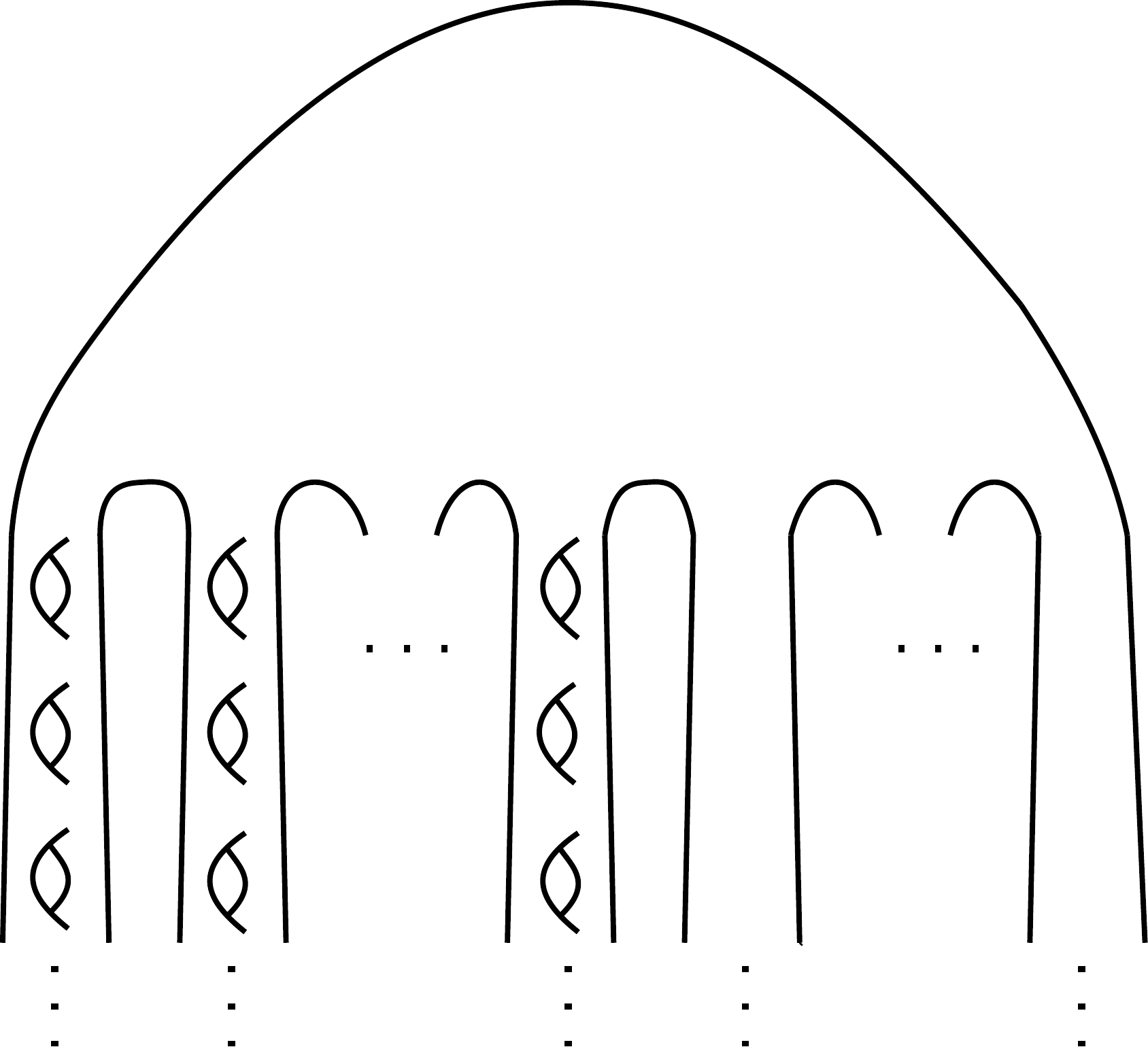}\\
  \caption{\emph{Riemann surface having $s$ ends and $m$ ends with infinite genus.}}
   \label{msends}
\end{center}
\end{figure}

The rest of the paper is  dedicated to answering  this question, we give a precise description of the set of generators of a Fuchsian (geometric Schottky) group   uniformizing a surface of infinite topological type. More precisely we prove the following result:

\begin{theo}\label{T:1}
Let $m$ and $s$ be two positive integers satisfying $1<m\leq s$ and let $\Gamma_{m, s}$ be the geometric Schottky group generated by the M\"{o}bius transformations 
\begin{equation*}
f_t(z)=\dfrac{-5tz+(25t^2-1)}{z-5t}, \quad 
\end{equation*}

\begin{equation*}
\begin{array}{ccc}
g_{k,n}(z)&=&\dfrac{-(17+(5k-3)\cdot 2^n\cdot 10)z+\dfrac{(17+(5k-3)\cdot 2^n\cdot 10)(13+(5k-3)\cdot 2^n\cdot 10)-1}{2^n\cdot 10}}{2^n\cdot 10z-(13+(5k-3)\cdot 2^n\cdot 10)},\\
&&\\
\end{array}
\end{equation*} 

\begin{equation*}
\begin{array}{ccc}
h_{k,n}(z)&=&\dfrac{-(13+(5k-3)\cdot2^{n}\cdot 10)z+\dfrac{(17+(5k-3)\cdot 2^{n}\cdot 10)(13+(5k-3)\cdot2^{n}\cdot 10)-1}{2^n\cdot 10}}{2^n\cdot 10z-(17+(5k-3)\cdot 2^{n}\cdot 10)},\\
&&\\
\end{array}
\end{equation*}
and their inverse, where $t\in \{1,\ldots, s-1\}$, $k\in \{1,\ldots, m\}$ and $n\in \N$. Then the quotient space $S_{m, s}:=\mathbb{H}/\Gamma_{m, s}$ is a geodesic complete Riemann surface having $s$ ends, and exactly $m$ of them with infinite genus.
\end{theo}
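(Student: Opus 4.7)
The plan is to split the argument into three stages: (i) verify that the isometric circles of the listed generators and their inverses form a pairwise-disjoint configuration in $\mathbb{H}$, so that the data satisfy the geometric Schottky condition; (ii) invoke the version of Poincar\'e's polygon theorem developed by Zielicz~\cite{Ziel} to conclude that $\Gamma_{m,s}$ is a freely and properly discontinuously acting Fuchsian group, with fundamental polygon $P$ the common exterior of the associated half-disks, whose quotient $\mathbb{H}/\Gamma_{m,s}$ is a geodesically complete hyperbolic surface of infinite area; (iii) read off the end and genus structure of the quotient from the combinatorics of the side-pairings along $\partial P$.

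The first step is a routine if lengthy computation. After normalising each generator so that $ad-bc=1$ and recalling that the isometric circle of $(az+b)/(cz+d)$ is the Euclidean circle $\{|cz+d|=1\}$, of radius $1/|c|$ and centre $-d/c$, one obtains that $f_t^{\pm 1}$ have isometric circles of radius $1$ centred at $\pm 5t$, while $g_{k,n}^{\pm 1}$ and $h_{k,n}^{\pm 1}$ give four circles of radius $1/(10\cdot 2^n)$ clustered around the pair of points $\pm(5k-3)$, with explicit offsets $13/(10\cdot 2^n)$ and $17/(10\cdot 2^n)$. Pairwise disjointness then reduces to an arithmetic comparison of centres and radii: the $f_t$-circles sit inside the intervals $[5t-1,5t+1]$; for each fixed $k$, the $g_{k,n},h_{k,n}$-circles are all contained in tiny one-sided neighbourhoods of $+(5k-3)$ lying strictly inside $(5(k-1)+1,5k-1)$ (and symmetrically on the negative side), so they are disjoint from every $f_t$-circle; and within a fixed-$k$ family the dyadic decrease $2^{-n}$ makes the sequence of circles nest toward $\pm(5k-3)$ without overlap. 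With disjointness in hand, step (ii) follows at once: the side-pairing condition is built in by construction, so Zielicz's extension of Poincar\'e's theorem to infinitely sided polygons yields both the Fuchsian/Schottky structure and geodesic completeness, while $P$ has infinite hyperbolic area because it contains arbitrarily tall vertical strips.

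The main obstacle is step (iii), the topological identification of $\mathbb{H}/\Gamma_{m,s}$. The key observation is that the set of accumulation points on $\partial\mathbb{H}$ of the family of isometric circles is exactly $\{\pm(5k-3):k=1,\ldots,m\}$ (together with its $\Gamma_{m,s}$-orbit), and that these points lie in the limit set $\Lambda$ of $\Gamma_{m,s}$. For each $k$, the infinite family of side-pairings $\{g_{k,n},h_{k,n}\}_{n\in\N}$ glues, at every scale $n$, two nested circles near $+(5k-3)$ to two nested circles near $-(5k-3)$ in such a way that the pair of identifications together produces one topological handle per $n$; these handles accumulate at $\{\pm(5k-3)\}$. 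Since the same gluings fuse the two limit points $+(5k-3)$ and $-(5k-3)$ into a single end of the quotient (they connect neighbourhoods of each limit point at every scale), each $k$ contributes exactly one end, of the Loch Ness monster type, carrying infinite genus; this produces $m$ such ends in total. The remaining ends come from the $s-1$ identifications $f_t$ together with the unbounded component of $\mathbb{R}\setminus\bigcup(\text{circles})$: each interval of $\mathbb{R}\setminus\Lambda$ in $\partial P$ that is \emph{not} absorbed into one of the $m$ infinite-genus ends yields a funnel-type end of zero genus after passing to the quotient. A careful bookkeeping of these intervals modulo $\Gamma_{m,s}$ shows that exactly $s-m$ such ends remain, giving the $s$ ends claimed in the theorem.
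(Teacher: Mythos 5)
Your steps (i) and (ii) line up with the paper's Steps 1--2: the same arithmetic on centres and radii gives the disjointness (the paper packages it as a strip argument plus its Lemma on $\epsilon$-neighbourhoods in order to verify condition 5 of the Schottky description), and the Fuchsian structure, completeness and infinite area are then drawn from Zielicz's results exactly as you propose. Your observation that, at a single scale $n$, the four circles $C(g_{k,n}^{\pm 1}), C(h_{k,n}^{\pm 1})$ occur in linked cyclic order on $\partial\mathbb{H}$ and therefore contribute a handle is also correct in substance (the paper instead assembles a twice-punctured torus from two consecutive scales by an explicit cut-and-paste), and it suffices for the infinite-genus part of the statement once the end structure is known.

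The genuine gap is in step (iii), in the count of the ends. Your mechanism --- each interval of $\mathbb{R}\setminus\Lambda$ on $\partial P$ not absorbed into an infinite-genus end yields a funnel end, and ``bookkeeping modulo $\Gamma_{m,s}$'' leaves $s-m$ of them --- is not an argument, and taken literally it points the wrong way. The polygon $P$ has \emph{infinitely many} free sides: besides the large intervals such as $(5t-4,\,5t-1)$ for $m<t\le s-1$ and the one through $\infty$, there are, for each $k\le m$, infinitely many free intervals between consecutive circles of the cluster accumulating at $5k-3$. Whether a free interval produces a funnel end is governed by the boundary cycle of free edges containing it. For the large intervals the cycle closes after two edges (for instance $f_{t-1}$ and $f_t$ identify the endpoints $5t-4$ and $5t-1$ with $-(5t-4)$ and $-(5t-1)$, the endpoints of the mirror interval, producing a compact curve cutting off one cylinder end), whereas near $5k-3$ a direct computation of where each generator sends the endpoints of its isometric circle shows that the edge identifications chain the small free intervals into an \emph{infinite, non-closing} sequence marching toward the accumulation point; none of them bounds a neighbourhood with compact boundary by itself, and that is the only reason they do not each generate a separate end. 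You neither carry out this cycle analysis nor exclude the a priori alternative that the funnels accumulating at $\pm(5k-3)$ form infinitely many distinct ends converging to the infinite-genus one --- which is precisely what happens in closely related constructions (compare the paper's closing remark, where a similar group produces end space homeomorphic to $\overline{\{1/n:n\in\mathbb{N}\}}$).

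What is needed, and what the paper supplies, is a global exhaustion argument: fix the explicit compact exhaustion $K_l$ of $\mathbb{H}$ and the explicit open sets $U_1,\dots,U_s$, show that each $\pi\bigl((U_t\setminus K_l)\cap F(\Gamma_{m,s})\bigr)$ is connected (the pieces into which the large circles cut the low horocyclic strip are re-joined in the quotient through the side pairings), that these $s$ sets are pairwise disjoint and exhaust $S_{m,s}\setminus\pi(K_l)$, and then apply Specker's criterion to conclude there are exactly $s$ ends. Some version of this connectivity-and-exhaustion step is the missing core of your step (iii); without it the claim ``exactly $s-m$ such ends remain'' is unsupported.
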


It follows from The Uniformization Theorem, see \cite[p.174]{Bear1}.

\begin{coro}
The fundamental group of the Riemann surface $S_{m, s}$ is isomorphic to the geometric Schottky group $\Gamma_{m, s}$.
\end{coro}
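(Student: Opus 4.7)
The plan is to realize the canonical projection $\pi\colon\mathbb{H}\to\mathbb{H}/\Gamma_{m,s}=S_{m,s}$ as the universal covering of $S_{m,s}$, after which the standard correspondence between universal covers and fundamental groups identifies the deck transformation group with $\pi_1(S_{m,s})$. By Theorem \ref{T:1}, $\Gamma_{m,s}$ is a geometric Schottky group acting properly discontinuously on the simply connected space $\mathbb{H}$ with quotient $S_{m,s}$, so the only missing ingredient needed before invoking this correspondence is that the action is also \emph{free}.

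To establish freeness, I would verify that $\Gamma_{m,s}$ contains no elliptic element, equivalently, that every non-identity element is hyperbolic (since discrete parabolic-or-hyperbolic elements have no fixed point in $\mathbb{H}$). Writing each generator from Theorem \ref{T:1} as a unit-determinant matrix in $\mathrm{SL}(2,\mathbb{R})$, a direct trace computation yields $|\mathrm{tr}(f_t)|=10t\geq 10>2$, and analogous inequalities show $|\mathrm{tr}(g_{k,n})|,|\mathrm{tr}(h_{k,n})|\geq 30>2$. Hence each generator is hyperbolic, with both fixed points on $\partial\mathbb{H}=\mathbb{R}\cup\{\infty\}$. The geometric Schottky construction of Zielicz \cite{Ziel}, built into the hypotheses of Theorem \ref{T:1}, then produces the pairing of disjoint Schottky intervals that makes the classical ping-pong argument applicable; thus every non-identity word in the generators is again hyperbolic and the group is torsion-free.

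With freeness in hand, $\pi\colon\mathbb{H}\to S_{m,s}$ is a regular covering map whose total space $\mathbb{H}$ is simply connected, so it is the universal cover of $S_{m,s}$ and its deck transformation group is exactly $\Gamma_{m,s}$. The conclusion $\pi_1(S_{m,s})\cong\Gamma_{m,s}$ is then the classical statement from covering-space theory recorded in this setting as part of the Uniformization Theorem, see \cite[p.~174]{Bear1}, which closes the proof. The only step beyond quoting the theorem is the freeness check, and this reduces to the trace estimates above; I expect no genuine obstacle, since the point of the geometric Schottky construction is precisely to guarantee a purely hyperbolic (hence fixed-point-free on $\mathbb{H}$) action.
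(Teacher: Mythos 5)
Your proposal is correct and follows essentially the same route as the paper, which simply invokes the Uniformization Theorem \cite[p.~174]{Bear1} (having already noted in Step 3 of the main proof that the action is free because the fixed-point set $W$ is empty, the isometric circles being pairwise disjoint). Your trace computations and the ping-pong/torsion-free argument just make explicit the freeness check that the paper handles more tersely; there is no substantive difference in approach.
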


The paper is organized as follows: In \textbf{section} \ref{section2} we collect the principal tools used through the paper and \textbf{section} \ref{section3} is dedicated to the proof of our main result, which is divided into five steps: In \textbf{step 1}  we define the group $\Gamma_{s,m}$, introducing $\mathcal{C}$ a suitable family of half circles of the hyperbolic plane, and $J$ the family of M\"{o}bius transformation having as isometric circles the elements of $\mathcal{C}$. Hence,  the elements of $J$ generate the group $\Gamma_{s,m}$.  In \textbf{step 2} we prove that $\Gamma_{s,m}$ is a Fuchsian group with  a  geometric Schottky structure.  In \textbf{step 3} we  prove that $\mathbb{H} / \Gamma_{s,m}$ is a complete geodesic Riemann surface. Finally in  In \textbf{step 4} and   \textbf{step 5} we characterize the ends space and the genus of each end for the Riemman surface.


\section{PRELIMINARIES}\label{section2}

We dedicate this section to introduce some general results of surfaces that will be needed for the proof of the main theorem. Our text is not self-contained, hence we refer the reader to the references  for details.

\subsection{Ends spaces}\label{section2.1}

By a surface $S$ we mean a connected and orientable $2$-dimensional manifold. 

\begin{definition}\label{d:2.1}
\cite{Fre}. Let $U_1\supset U_2\supset \cdots$ be an infinite nested sequence of non-empty connected open subsets of $S$, such that: 

$\ast$ The boundary of $U_n$ in $S$ is compact for every $n\in\mathbb{N}$.

$\ast$ For any compact subset $K$ of $S$ there is $l\in\mathbb{N}$ such that $U_{l}\cap K=\emptyset$. We shall denote the sequence $U_1\supset U_2\supset \cdots$ as $(U_n)_{n\in\mathbb{N}}$. 

Two sequences $(U_{n})_{n\in\mathbb{N}}$ and $(U_{n}^{'})_{n\in \mathbb{N}}$ are equivalent if for any $l \in \mathbb{N}$ it exists $k \in \mathbb{N}$ such that $U_{l}\supset U_k^{'}$ and $n \in \mathbb{N}$ it exists $m \in \mathbb{N}$ such that $U_{n}'\supset U_m$. We will denote the set of ends  by $Ends(S)$ and each equivalence class $[U_{n}]_{n\in\mathbb{N}}\in Ends(S)$ is called an \textbf{end} of $S$. For every non-empty open subset $U$ of $S$ in which its boundary $\partial U$ is compact, we define the set $
U^{*}:=\{[U_{n}]_{n\in\mathbb{N}}\in  Ends(S)\, : \, U_{j}\subset U \text{ for some }j\in\mathbb{N}\}.
$
The collection of all sets of the form $U^{*}$, with $U$ open with compact boundary of $S$, forms a base for the topology of $Ends(S)$.
\end{definition}

For some surfaces their ends space  carries extra information, namely, those ends that carry \textbf{infinite} genus. This data, together with the space of ends and the orientability class, determines the topology of $S$. The details of this fact are discussed in the following paragraphs. Given that, this article only deals with orientable surfaces; from now on, we dismiss the non-orientable case. 

A surface is said to be \textbf{planar} if all of its compact subsurfaces are of genus zero. An end $[U_n]_{n\in\mathbb{N}}$ is called \textbf{planar} if there is $l\in\mathbb{N}$ such that $U_l$ is planar. The \textbf{genus} of a surface $S$ is the maximum of the genera of its compact subsurfaces. Remark that, if a surface $S$ has \textbf{infinite genus}, there is no finite set $\rm \mathcal{C}$ of mutually non-intersecting simple closed curves with the property that $S\setminus \mathcal{C}$ is  \textbf{connected and planar}. We define $Ends_{\infty}(S)\subset Ends(S)$ as the set of all ends of $S$ which are not planar or with infinite genus. It comes from the definition that $Ends_{\infty}(S)$ forms a closed subspace of $Ends(S)$.

\begin{theorem}[Classification of non-compact and orientable surfaces, \cite{Ker}, \cite{Ian}]\label{t:2.2}
Two non-compact and orientable surfaces $S$ and $S'$  having the same genus are homeomorphic if and only if there is a homeomorphism $f: Ends(S)\to Ends(S^{'})$ such that $f( Ends_{\infty}(S))= Ends_{\infty}(S')$.
\end{theorem}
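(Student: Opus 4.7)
My plan is to follow the five-step outline already indicated in the introduction. The point of departure is the observation that every generator is presented in the unique form $\frac{az+b}{cz+d}$ with $ad-bc=1$, so its isometric circle is $\{|cz+d|=1\}$. A direct computation gives: the isometric circle of $f_t$ is centered at $5t$ with radius $1$ and that of $f_t^{-1}$ is centered at $-5t$ with radius $1$; the isometric circle of $g_{k,n}$ is centered at $(5k-3)+\frac{13}{10\cdot 2^n}$ with radius $\frac{1}{10\cdot 2^n}$, and of $h_{k,n}$ at $(5k-3)+\frac{17}{10\cdot 2^n}$ with the same radius; and analogous formulas with reflected centers hold for $g_{k,n}^{-1}$ and $h_{k,n}^{-1}$. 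I collect the upper halves of these Euclidean circles into the family $\mathcal{C}$ and let $F$ be the component of $\mathbb{H}\setminus\bigcup\mathcal{C}$ whose closure contains a neighborhood of $\infty$.

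The next step is to verify that the closed disks bounded by the circles in $\mathcal{C}$ are pairwise disjoint. This is an elementary bookkeeping: the $f_t$-disks have radius $1$ and centers on $\{\pm 5,\pm 10,\ldots\}$, so they are separated by gaps $\geq 3$; for fixed $k$, the $g_{k,n}$- and $h_{k,n}$-disks have centers accumulating geometrically to $5k-3$ from the right (and their reflections from the left), and a term-by-term comparison shows that consecutive disks differ in center by strictly more than the sum of their radii; finally the $k$-columns are separated by integer shifts $\geq 5$, so they do not interfere with one another nor with the $f_t$-disks. Once disjointness is established, the defining axioms of a geometric Schottky group in Zielicz~\cite{Ziel} (a Klein-combination/Poincaré polygon argument) give immediately that $\Gamma_{m,s}$ is a discrete, faithfully freely generated Fuchsian group admitting $F$ as a convex hyperbolic fundamental domain. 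Because no circle in $\mathcal{C}$ is tangent to another or to $\mathbb{R}\cup\{\infty\}$, the pairing identifies interior points of sides to interior points of sides, so $\mathbb{H}/\Gamma_{m,s}$ inherits a smooth complete hyperbolic metric.

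The remaining content is to read off the topology of $S_{m,s}$ from $F$ modulo the side identifications. By Definition~\ref{d:2.1} I enumerate the unbounded pieces of the complements of large compact subsets of $F$. Collapsing side identifications, each of the following contributes one topological end: the upward direction towards $\infty$; each maximal subinterval of $\mathbb{R}$ that is disjoint from all closed disks and from each accumulation point $\pm(5k-3)$; and, crucially, each accumulation point $5k-3$ (and its reflected partner $-(5k-3)$, which pairs up after quotienting by $f_t$). A careful count — matching the $s-1$ pairings induced by the $f_t$ with the gaps between the $k$-columns — shows the quotient has exactly $s$ ends. For genus, any neighborhood of the end at an accumulation point meets the projections of infinitely many pairs $\{g_{k,n},h_{k,n}\}$; each such pair, once its two bounding semicircles are identified, contributes a handle, so the end has infinite genus. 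The remaining $s-m$ ends have neighborhoods in $F$ meeting only finitely many disks, so their images are planar. Theorem~\ref{t:2.2} then certifies $\mathbb{H}/\Gamma_{m,s}\cong S_{m,s}$.

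The routine parts are the isometric-circle identifications and the disjointness estimates. The hard step is the topology bookkeeping in the last paragraph: matching the unbounded components of $F\setminus K$ with abstract ends after the $\Gamma_{m,s}$-identifications, and showing that the infinite handles genuinely accumulate only at the $m$ prescribed ends while the other ends remain planar. This is where I expect the main obstacle, because one must both count identifications on the real-line boundary of $F$ and rule out genus leaking into the planar ends through the pairings of the finitely many $f_t$.
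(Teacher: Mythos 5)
Your proposal does not address the statement in question. The statement is Theorem \ref{t:2.2}, the Ker\'ekj\'art\'o--Richards classification of non-compact orientable surfaces: two such surfaces of the same genus are homeomorphic if and only if there is a homeomorphism of their ends spaces carrying the non-planar ends onto the non-planar ends. This is a classical result that the paper does not prove but imports from \cite{Ker} and \cite{Ian}. What you have written instead is a sketch of the proof of the paper's main result, Theorem \ref{T:1} --- the construction of the geometric Schottky group $\Gamma_{m,s}$, the disjointness of the isometric circles, the fundamental domain, and the count of ends and genus. Indeed your final sentence explicitly \emph{invokes} Theorem \ref{t:2.2} (``Theorem~\ref{t:2.2} then certifies $\mathbb{H}/\Gamma_{m,s}\cong S_{m,s}$''), so your argument is circular as a proof of that theorem: you are assuming the very statement you were asked to establish.

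A genuine proof of Theorem \ref{t:2.2} has nothing to do with Schottky groups or isometric circles. The ``only if'' direction follows because a homeomorphism of surfaces induces a homeomorphism of ends spaces preserving the planar/non-planar dichotomy. The substantial direction is the converse: one must exhaust each surface by compact bordered subsurfaces whose complementary components correspond to clopen subsets of the ends space, put each such exhaustion into a canonical form (controlling where the genus accumulates, which is exactly the data recorded by $Ends_{\infty}$), and then build a homeomorphism inductively, extending it one compact piece at a time so that it is compatible with the given homeomorphism $f$ of ends spaces. This is the content of Richards' argument in \cite{Ian} (Theorem 1 there), and none of it appears in your proposal. If your intent was to prove Theorem \ref{T:1}, your outline is broadly consistent with the paper's five-step proof, but it is not an answer to the question asked.
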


\begin{proposition}\label{p:2.3}
\cite[Proposition 3]{Ian}. The space of ends of a connected surface $S$ is totally disconnected, compact, and Hausdorff. In particular, $Ends(S)$ is homeomorphic to a closed subspace of the Cantor set.
\end{proposition}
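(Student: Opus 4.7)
The plan is to reduce everything to the fact that an inverse limit of finite discrete spaces (a profinite space, or ``Stone space'') is automatically compact, Hausdorff and totally disconnected, and that any second-countable space of this form embeds as a closed subspace of the Cantor set. The identification of $Ends(S)$ with such an inverse limit is the geometric content.

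First I would choose an exhaustion of $S$ by an increasing sequence of compact connected subsurfaces $K_1\subset K_2\subset\cdots$ with $\bigcup_n K_n=S$ and with $\partial K_n$ a compact $1$-manifold for every $n$; such an exhaustion exists because $S$ is a connected second-countable $2$-manifold. Let $\pi_n$ be the set of connected components of $S\setminus K_n$ whose closure in $S$ is non-compact. Since $\partial K_n$ has finitely many components, $\pi_n$ is finite. The inclusion $K_n\subset K_{n+1}$ induces a canonical surjection $\pi_{n+1}\to\pi_n$ sending each $C\in\pi_{n+1}$ to the unique element of $\pi_n$ containing $C$.

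Next I would establish a natural bijection
\[
\Phi\colon Ends(S)\longrightarrow\varprojlim_{n}\pi_n,
\]
by sending a class $[(U_n)_{n\in\N}]$ to the compatible sequence $(C_n)_{n\in\N}$, where $C_n\in\pi_n$ is the unique component of $S\setminus K_n$ that eventually contains the $U_j$'s; the second axiom of Definition \ref{d:2.1} guarantees this is well defined, and the equivalence relation on nested sequences guarantees injectivity. Surjectivity is immediate: any compatible sequence $(C_n)\in\varprojlim\pi_n$ gives itself a nested sequence of connected open sets with compact boundary that escape every compact set. Finally one checks that the basic neighbourhoods $U^{*}$ of Definition \ref{d:2.1} correspond exactly to cylinder sets in $\varprojlim\pi_n$, so $\Phi$ is a homeomorphism.

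With this identification the three properties of the statement are all properties of a profinite space: compactness follows from Tychonoff applied to the finite discrete $\pi_n$'s together with the fact that the compatibility condition is a closed subset of $\prod_n\pi_n$; Hausdorffness is inherited from the product; total disconnectedness follows because the cylinder sets are clopen. Because the exhaustion is countable, the inverse limit is second countable, hence metrizable (Urysohn), and any compact, Hausdorff, totally disconnected, metrizable space embeds as a closed subspace of the Cantor set by the standard Stone-space embedding theorem. The one delicate point I expect is checking that $\pi_n$ is finite and that the natural maps $\pi_{n+1}\to\pi_n$ are well defined and surjective; this boils down to the fact that a compact surface with boundary has finitely many boundary components, and that every non-compact end-containing component of $S\setminus K_{n+1}$ sits inside a unique such component of $S\setminus K_n$.
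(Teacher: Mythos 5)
The paper does not actually prove this proposition---it is quoted with a citation to \cite[Proposition~3]{Ian}---and your inverse-limit argument is correct and is essentially the standard proof given there: identify $Ends(S)$ with $\varprojlim \pi_n$ for an exhaustion by compact bordered subsurfaces and invoke the basic facts about profinite metrizable spaces. The only step you gloss over is the injectivity of $\Phi$ (equivalently, that the cylinder sets refine the basis $\{U^{*}\}$), which requires the observation that if $\partial U_l\subset K_n$ and $U_j\subset U_l$ with $U_j\cap K_n=\emptyset$, then the entire component $C_n$ of $S\setminus K_n$ containing $U_j$ is contained in $U_l$, since $C_n$ is connected, meets $U_l$, and misses $\partial U_l$; with that inserted the argument is complete.
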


\begin{remark}\label{r:2.4}
\cite{SPE} A surface $S$ has exactly $n$ ends if and only if for all compact subset $K \subset S$ there is a compact $K^{'}\subset S$ such as $K\subset K^{'}$ and $S\setminus  K^{'}$ has $n$ connected component.
\end{remark}


\subsection{Isometric circles of the Hyperbolic plane}\label{section2.2}
 
Given the M\"{o}bius transformation $f\in PSL(2,\mathbb{R})$  with $c\neq 0$, the half-circle
$
C(f):=\{z\in\mathbb{H}:| cz + d |^{-2}=1\}
$
will be called the \textbf{isometric circle of} $f$ (see \emph{e.g.}, \cite[p. 9]{MB}). We note that $\dfrac{-d}{c}\in\mathbb{R}$ the center of $C(f)$ is mapped by $f$ onto the infinity point  $\infty$. Further, $f$ sends the half-circle $C(f)$ onto $C(f^{-1})$ the isometric circle of the M\"{o}bius transformation $f^{-1}$,  such that
$
\label{eq:4}
C(f^{-1})=\{z\in\mathbb{H}:| -cz + a |^{-2}=1\}.
$
The \emph{inside} and \emph{outside}, respectively, of the circle $C(f)$ are the sets $\check{C}(f):=\{z\in\mathbb{H}: | cz + d |^{-2}<1\}$ and $\hat{C}(f):=\{z\in\mathbb{H}: | cz + d |^{-2}>1\}$, respectively \cite{MB}.

\begin{remark}
\label{r:2.5}
The isometric circles $C(f)$ and $C(f^{-1})$ have the same radius $r= | c |^{-1}$, and their respective centers are $\alpha=\dfrac{-d}{c}$ and $\alpha^{-1}=\dfrac{a}{c}$. 
\end{remark}

If $C$ is a half-circle with center $\alpha\in\mathbb{R}$ and radius $r>0$, the \emph{M\"{o}bius transformation} $f_C\in PSL(2,\mathbb{R})$ is said be $\emph{inversion in}$ $C$, if it has the hyperbolic geodesic $C$ as fixed points exchanging its the ends points, and maps $\check{C}$ the inside of $C$ (respectively, $\hat{C}$ the outside of $C$) onto $\hat{C}$ the outside of $C$ (respectively, $\check{C}$ the inside of $C$).

\begin{remark}
 \label{r:2.10}
We let $L_{\alpha-2r}$ and $L_{\alpha+2r}$ be the two orthogonal straight lines  to the real axis $\mathbb{R}$ through the points $\alpha-2r$ and $\alpha+2r$, respectively. Then the inversion $f_C$ sends $L_{\alpha-2r}$ (analogously, $L_{\alpha+2r}$) onto the half-circle whose ends points are $\alpha+\frac{r}{2}$ and $\alpha$ (respectively, $\alpha-\frac{r}{2}$ and $\alpha$). See the Figure \ref{Figure5}. Given that $f_C$ is an element of $PSL(2,\mathbb{R})$, then for every $\epsilon < \frac{r}{2}$ the closed hyperbolic $\epsilon$-neighborhood of the half-circle $C$ does not intersect any of the hyperbolic geodesics $L_{\alpha-2r}$, $f_{C}(L_{\alpha-2r})$, $L_{\alpha+2r}$, and $f_{C}(L_{\alpha+2r})$.\begin{figure}[h!]
\begin{center}
\includegraphics[scale=0.5]{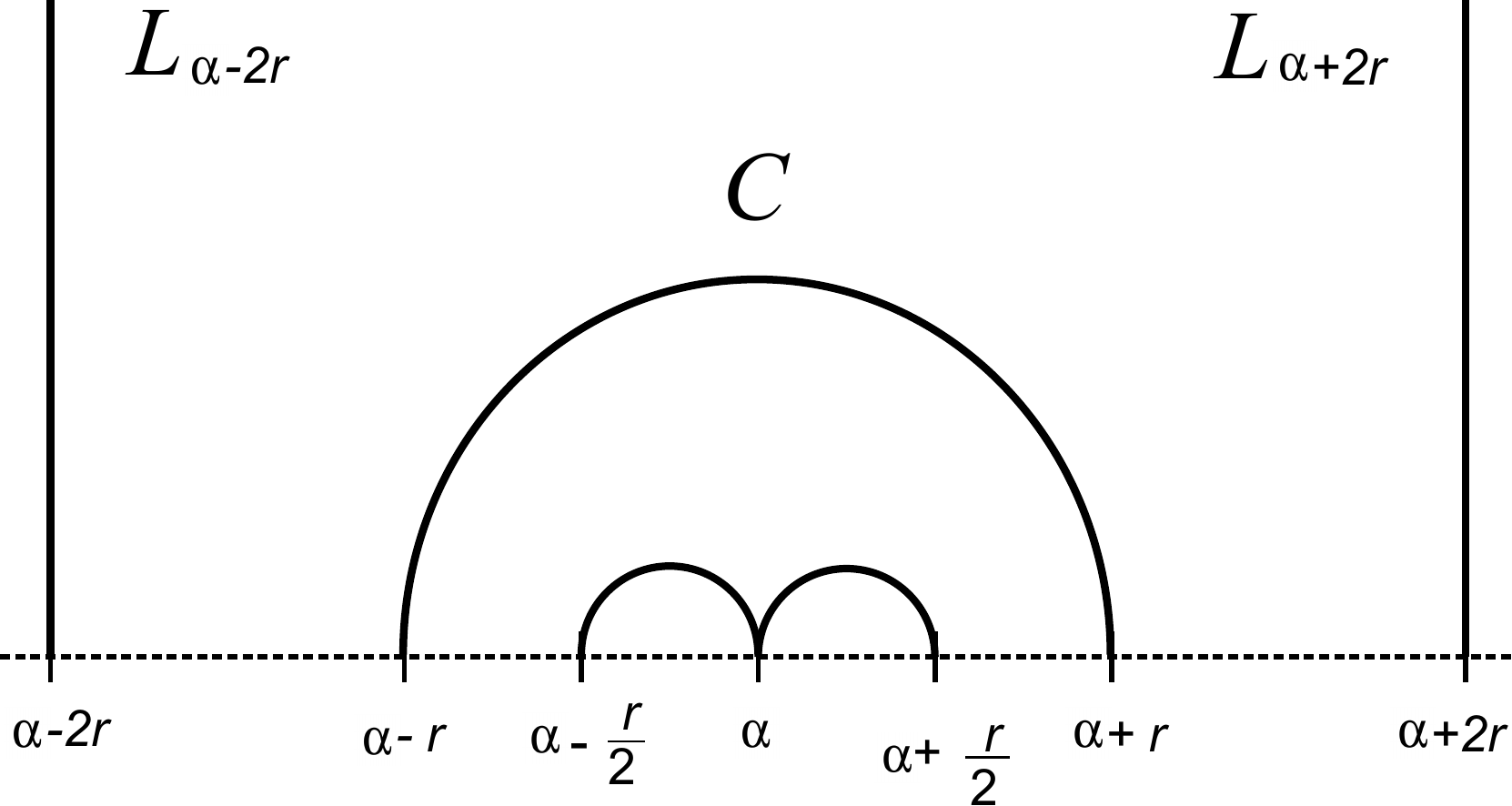}\\
  \caption{\emph{Inversion in $C$.}}
   \label{Figure5}
\end{center}
\end{figure}

\end{remark}

\begin{lemma}\label{l:2.6}
 Let  $C_1$ and $C_2$ be two disjoint half-circles having centers and radius $\alpha_1, \alpha_2 \in\mathbb{R}$, and $r_1, r_2 > 0$, respectively. Suppose  that $|\alpha_1 -\alpha_2 |> (r_1 + r_2)$, then for every $\epsilon<\frac{\min\{r_1, r_2\}}{2}$ the closed hyerbolic $\epsilon$-neighborhoods of the half-circles $C_1$ and $C_{2}$ are disjoint.
\end{lemma}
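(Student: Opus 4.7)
My plan is to reduce the disjointness of the two $\epsilon$-neighborhoods to a strip-containment argument furnished by Remark~\ref{r:2.10}. Assume without loss of generality that $\alpha_{1}<\alpha_{2}$. The bound $\epsilon<\min\{r_{1},r_{2}\}/2$ gives $\epsilon<r_{i}/2$ for each $i\in\{1,2\}$, so Remark~\ref{r:2.10} applies to each half-circle and tells us that the closed hyperbolic $\epsilon$-neighborhood $N_{\epsilon}(C_{i})$ is disjoint from the two vertical geodesics $L_{\alpha_{i}-2r_{i}}$ and $L_{\alpha_{i}+2r_{i}}$ flanking $C_{i}$.

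Next I would argue that each neighborhood is trapped in the corresponding open vertical strip $\Sigma_{i}:=\{z\in\mathbb{H}:\alpha_{i}-2r_{i}<\mathrm{Re}(z)<\alpha_{i}+2r_{i}\}$. The set $N_{\epsilon}(C_{i})$ is connected, being the image of $C_{i}\times[-\epsilon,\epsilon]$ under the geodesic normal exponential flow applied to the connected geodesic $C_{i}$, and it contains $C_{i}$, which lies strictly inside $\Sigma_{i}$. A connected set containing $C_{i}$ and avoiding $\partial\Sigma_{i}=L_{\alpha_{i}-2r_{i}}\cup L_{\alpha_{i}+2r_{i}}$ cannot leave $\Sigma_{i}$, so $N_{\epsilon}(C_{i})\subset\Sigma_{i}$.

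It then suffices to check that $\Sigma_{1}\cap\Sigma_{2}=\emptyset$, from which $N_{\epsilon}(C_{1})\cap N_{\epsilon}(C_{2})=\emptyset$ follows immediately. This final step is where I expect the technical heart of the argument to lie: the half-width $2r_{i}$ produced by Remark~\ref{r:2.10} is rather generous compared with the true Euclidean extent of $N_{\epsilon}(C_{i})$, so closing the numerical gap between the hypothesis on $|\alpha_{1}-\alpha_{2}|$ and the strip-disjointness condition will likely require either sharpening the containment by replacing $L_{\alpha_{i}\pm 2r_{i}}$ with the explicit hypercycles that bound $N_{\epsilon}(C_{i})$ (whose horizontal extent is only $|\mathrm{Re}(z)-\alpha_{i}|\le r_{i}\cosh\epsilon$), or else estimating $d_{\mathbb{H}}(C_{1},C_{2})$ directly via the cross-ratio of the four endpoints $\alpha_{i}\pm r_{i}$ and verifying $d_{\mathbb{H}}(C_{1},C_{2})>2\epsilon$ under the hypothesis.
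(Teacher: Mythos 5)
Your strategy coincides with the paper's own proof: Remark~\ref{r:2.10} is used to trap the closed hyperbolic $\epsilon$-neighborhood of $C_i$ inside the open vertical strip $\Sigma_i=\{z:\alpha_i-2r_i<\mathrm{Re}(z)<\alpha_i+2r_i\}$, and one then wants $\Sigma_1\cap\Sigma_2=\emptyset$. Your connectedness argument for the containment $N_\epsilon(C_i)\subset\Sigma_i$ is fine (the paper asserts the same containment, with an auxiliary M\"obius map exchanging the two strips that plays no essential role). So up to the last step the two arguments are identical.

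The gap you flag at the end is genuine, and you are right not to expect to close it: the hypothesis $|\alpha_1-\alpha_2|>r_1+r_2$ is too weak both for the strip argument and for the conclusion itself. Disjointness of $\Sigma_1$ and $\Sigma_2$ requires $|\alpha_1-\alpha_2|\geq 2(r_1+r_2)$, whereas the paper's proof simply asserts in its first sentence that the stated hypothesis makes the strips disjoint; this is false (take $\alpha_1=0$, $\alpha_2=2.5$, $r_1=r_2=1$: the strips $(-2,2)$ and $(0.5,4.5)$ overlap). Worse, the lemma itself fails as stated. Your hypercycle computation is the right tool to see this: the closed $\epsilon$-neighborhood of $C_i$ is bounded by the two arcs through $\alpha_i\pm r_i$ lying on the Euclidean circles of radius $r_i\cosh\epsilon$ centered at $\alpha_i\pm i\,r_i\sinh\epsilon$, so its horizontal extent is exactly $|\mathrm{Re}(z)-\alpha_i|\leq r_i\cosh\epsilon$. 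Taking $r_1=r_2=1$, $\alpha_1=0$, $\alpha_2=2.001$ and $\epsilon=0.4<\tfrac12$, the two outer bounding circles have radius $\cosh(0.4)\approx 1.081$ and centers $2.001$ apart, hence meet in $\mathbb{H}$ at points that lie in both closed neighborhoods. The statement becomes true --- and both your argument and the paper's go through verbatim --- once the separation hypothesis is strengthened to $|\alpha_1-\alpha_2|\geq 2(r_1+r_2)$ (the sharp $\epsilon$-dependent threshold being $|\alpha_1-\alpha_2|>(r_1+r_2)\cosh\epsilon$, which is weaker than $2(r_1+r_2)$ whenever $\cosh\epsilon\leq 2$, as is the case for the radii $r_i\leq 1$ occurring in this paper). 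Every pair of half-circles in the family $\mathcal{C}$ to which the lemma is applied in Step~2 does satisfy this stronger separation, so the main construction is unaffected; but as a freestanding lemma the statement needs the stronger hypothesis, and your instinct that the numerical step could not be closed as written was correct.
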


\begin{proof} By hypothesis $|\alpha_1 -\alpha_2 |> (r_1 + r_2)$, then the open strips $S_1:=\{z\in\mathbb{H}: \alpha_1-2r_1 < Re(z)<\alpha_1+2r_1 \}$ and $S_2:=\{z\in\mathbb{H}: \alpha_{2}-2r_{2} < Re(z)<\alpha_{2}+2r_{2}\}$ are disjoint. We remark that the half-circle $C_i$ belongs to the open strip $S_i$, for every $i\in\{1, 2\}$. Further, the reflection $f_C$,
\begin{equation}\label{eq:9}
 z \mapsto \dfrac{\frac{r_2z}{\sqrt{r_1 r_2}} +\frac{(r_1\alpha_2-r_2\alpha_1)}{\sqrt{r_1 r_2}}}{\frac{r_1}{\sqrt{r_1 r_2}}}
\end{equation}
sends the open strip $S_1$ onto the open strip $S_2$ and vice versa. We denote $r=\min\{r_1, r_2\}$. For each $\epsilon<\frac{r}{2}$, let $\overline{U}_{\epsilon}^i$ be the $\epsilon$-neighborhood of the half-circle $C_i$, with $i\in\{1,2\}$, by Remark \ref{r:2.10} $\overline{U}_{\epsilon}^i\subset S_i$. Given that $f_C$ is an element of $PSL(2,\mathbb{R})$, then $f_{C}(\overline{U}_{\epsilon}^1)=\overline{U}_{\epsilon}^2$. Hence, for all $\epsilon<\frac{r}{2}$, $\overline{U}_{\epsilon}^1 \cap \overline{U}_{\epsilon}^2=\emptyset$.
\end{proof}


\subsection{Fuchsian groups and Fundamental region}\label{section2.4}

\begin{definition}\label{d:2.7}
\cite{KS} Given a Fuchsian group $\Gamma < PSL(2,\mathbb{R})$. A closed region $R$ of the hyperbolic plane $\mathbb{H}$ is said to be \emph{a fundamental region} for $\Gamma$ if it satisfies the following facts:

\textbf{(i)} The union $\bigcup\limits_{f\in \Gamma} f(R)=\mathbb{H}$.

\textbf{(ii)} The intersection of the interior sets $Int(R) \cap f(Int (R)) = \emptyset$ for each $f\in \Gamma \setminus \{Id\}$.

\noindent The difference set $\partial R= R \setminus Int(R)$ is called the \emph{boundary} of $R$ and the family $\mathfrak{T}:=\{f(R): f\in \Gamma\}$ is called the \emph{tessellation} of $\mathbb{H}$.
\end{definition}

 If $\Gamma$ is a Fuchsian group and each one of its elements satisfy that $c\neq 0$, then the subset $R_0$ of $\mathbb{H}$ defined as follows
\begin{equation}
\label{eq:5}
R_0 :=\bigcap\limits_{f\in \Gamma} \overline{\hat{C}(f)}\subseteq \mathbb{H},
\end{equation}
is a fundamental domain for the group $\Gamma$ (see \emph{e.g.}, \cite{Ford}, \cite[Theorem H.3 p. 32]{MB},  \cite[Theorem 3.3.5]{KS}). The fundamental domain $R_0$ is well-known as the \emph{Ford region for} $\Gamma$. 

On the other hand, we can get a Riemann surface from any Fuchsian group $\Gamma$. It is only necessary to define the action $\alpha : \Gamma \times \mathbb{H} \to  \mathbb{H}$ as follows $(f,z) \mapsto f(z)$, which is proper and discontinuous (see \cite[Theorem 8. 6]{KS2}). Now, we define the subset $W:=\{w\in\mathbb{H}: f(w)=w \text{ for any } f\in \Gamma-\{Id\}\}\subseteq\mathbb{H}$. We note that the subset $W$ is countable and discrete, and the action $\alpha$ leaves invariant the subset $W$. Then the action $\alpha$ restricted to the hyperbolic plane $\mathbb{H}$ removing the subset $W$ is free, proper and discontinuous. 
Therefore, the quotient space $S:= (\mathbb{H} \setminus W)/\Gamma$ is well-defined and via the projection map $\pi : (\mathbb{H} \setminus W) \to  S$ such as $z \mapsto [z]$, it comes with a hyperbolic structure, it means, $S$ is a Riemann surface (see \emph{e.g.}, \cite{LJ}).

\begin{remark}\label{r:2.8}
If $R$ is a locally finite\footnote{In the sense due to Beardon  on \cite[Definition 9.2.3]{Bear}.} fundamental domain for the Fuchsian group $\Gamma$, then the quotient space $(\mathbb{H} \setminus W)/\Gamma$ is homeomorphic to $R/\Gamma$ (see Theorem 9.2.4 on \cite{Bear}).
\end{remark}




\subsection{Classical Schottky groups and Geometric Schottky groups}\label{section2.5}

In general, by a classical Schottky group it is understood as a finitely generated subgroup of $PSL(2, \C)$, generated by isometries sending the interior of one circle into the exterior of a different circle, both of them disjoint. From the various definitions, we consider the one given in \cite{MB}, but there are alternative and similar definitions that can be found in \cite{BJ}, \cite{Carne}, \cite{TM}.

Let $C_1, C_1^{\prime}, \ldots, C_n, C_n^{\prime}$ be a set of disjoint countable circles in the extended complex plane $\widehat{\mathbb{C}}$, for any $n\in\mathbb{N}$, bounding a common region $D$. For every $j\in\{1,\ldots,n\}$, we consider the  M\"{o}bius transformation $f_j$, which sends the circle $C_j$ onto the circle $C^{'}_j$, \emph{i.e.}, $f_j(C_j)= C_j^{\prime}$ and $f_j(D)\cap D=\emptyset$. The group $\Gamma$ generated by the set $\{f_j, f_j^{-1}: j\in \{1,\ldots, n\}\}$ is called a \emph{classical Schottky group}.

The \emph{Geometric Schottky groups} can be acknowledged as a nice generalization of the \emph{Classical Schottky groups} because the definition of the first group is extended to the second one, in the sense that the Classical Schottky groups are  finitely generated by definition, while the Geometric Schottkky group can be infinitely generated. Geometric Schottky groups, defined by \emph{Anna Zielicz} (see \cite[Section 3]{Ziel}) are a nice generalization of classical Schottky groups to infinitely many generators, and these are central to our main Theorem \ref{T:1}.

Consider an open interval $A$ in the  real line $\mathbb{R}$. Let $\partial A$ denote its boundary in $\mathbb{R}$, which consists of two points. The two points of $\partial A$ determine a geodesic $\gamma_A$ in $\mathbb{H}$. This geodesic in turn
defines two closed half-planes in $\mathbb{H}$. The boundary at infinity of one of these two closed
half-planes is equal to the closure $\overline{A} := A\cup\partial A$ and we will denote this closed half-plane
by $\hat{A}$. A demostration of $A$, $\gamma_A$ and $\hat{A}$ has been depited in Figure \ref{dibujo}.

\begin{figure}[h]
\begin{center}
\includegraphics[scale=0.6]{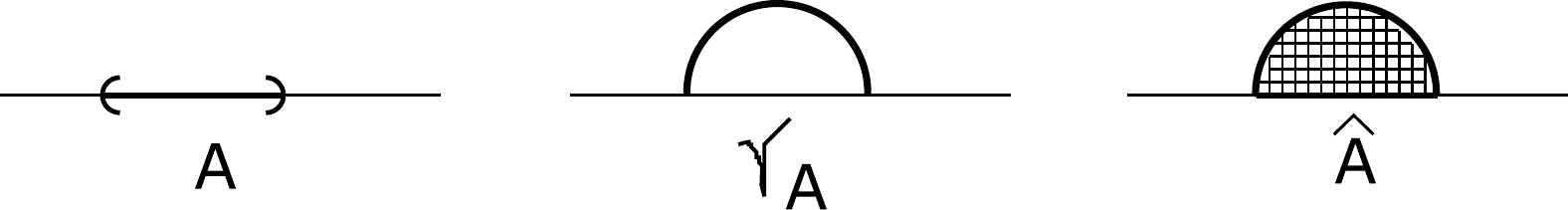}\\
  \caption{\emph{Objects $A$, $\gamma_A$ and $\hat{A}$.}}
   \label{dibujo}
\end{center}
\end{figure}

\begin{definition}\label{d:2.9}
\cite[Definition 2. p. 28]{Ziel} Let $\{A_k:k\in I\}$ be a family of open intervals  with finite length in the real line $\mathbb{R}$, where $I$ is a symmetric subset\footnote{A subset $I\subseteq \mathbb{Z}$ is called \emph{symmetric} if it satisfies that $0\notin I$ and for every $k\in I$ implies $-k\in I$.} of $\mathbb{Z}$ and let $\{f_k:k\in I\}$ be a subset of $PSL(2, \mathbb{R})$. The pair
\begin{equation}\label{eq:10}
\mathfrak{U}(A_k,f_k,I):=(\{A_k\}, \{f_k\})_{k\in I}
\end{equation}
 is called a \emph{Schottky description}\footnote{The writer gives this definition to the Poincar\'e disc and we use its equivalent to the half plane.} if it satisfies the following conditions:
\begin{enumerate}
  \item The closures subsets $\overline{A_k}$ in $\mathbb{C}$ are mutually disjoint.
  \item None of the $\overline{A_k}$ contains a closed half-circle.
  \item For every $k\in I$, we denote as $C_k$ the half-circle whose ends points coincide to the ends points of $\overline{A_k}$, which  is the isometric circle of $f_k$. Analogously, the half-circle $C_{-k}$ is the isometric circle of $f_{-k}:=f^{-1}_k$.
  \item For each $k\in I$, the M\"obius transformation $f_k$ is hyperbolic.
  \item There is an $\epsilon> 0$ such that the closed hyperbolic $\epsilon$-neighborhood of the half-circles $C_{k}$, $k\in I$ are pairwise disjoint.
\end{enumerate}
\end{definition}

\begin{definition}\label{d:2.10}
\cite[Definition 3. p. 29]{Ziel} A subgroup $\Gamma$ of $PSL(2,\mathbb{R})$ is called \emph{Schottky type} if there exists a Schottky description $\mathfrak{U}(A_k,f_k,I)$ such that the generated group by the set $\{f_k:k\in I\}$ is equal to $\Gamma$, \emph{i.e.}, we have 
\begin{equation}\label{eq:generated}
\Gamma=\langle f_k : k\in I \rangle.
\end{equation}
\end{definition}

\begin{definition}\label{Geometric_schokkty_group}
A subgroup $\Gamma$ of $PSL(2,\mathbb{R})$ together a Schottky description $\mathfrak{U}(A_k,f_k,I)$ satisfying property \ref{eq:generated} will be referred to as \emph{geometric Schottky group}.  
\end{definition}

We note that any Schottky description $\mathfrak{U}(A_k,f_k,I)$ defines a Geometric Schottky group.


\begin{proposition}\label{p:2.11}
\cite[Proposition 4. p.34]{Ziel} Every Geometric Schottky group $\Gamma$ is a Fucshian group.
\end{proposition}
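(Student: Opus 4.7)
The proof follows the five-step plan announced after Theorem~\ref{T:1}. The first step is to identify, via Remark~\ref{r:2.5}, the isometric circles of the generators: $C(f_t)$ and $C(f_t^{-1})$ are half-circles of radius $1$ centered at $\pm 5t$ for $t\in\{1,\ldots,s-1\}$, while for each $k\in\{1,\ldots,m\}$ and $n\in\mathbb{N}$ the half-circles $C(g_{k,n})$ and $C(h_{k,n})$ have radius $\tfrac{1}{2^n\cdot 10}$ and centers $(5k-3)+\tfrac{13}{2^n\cdot 10}$ and $(5k-3)+\tfrac{17}{2^n\cdot 10}$ respectively, and the inverses $C(g_{k,n}^{-1})$, $C(h_{k,n}^{-1})$ are the mirror-symmetric half-circles clustering at $-(5k-3)$. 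Let $\mathcal{C}$ denote the family of all these half-circles and $J$ the corresponding collection of M\"obius transformations, so that $\Gamma_{m,s}=\langle J\rangle$.

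In Step~2 I verify the five conditions of Definition~\ref{d:2.9} for $(\mathcal{C},J)$. Pairwise disjointness of closures reduces to elementary estimates: the big circles have centers at multiples of $5$ and radius $1$, so are separated by $\geq 3$; small circles at the same scale $n$ and the same $k$ have center separation $\tfrac{4}{2^n\cdot 10}$ exceeding the sum of radii $\tfrac{2}{2^n\cdot 10}$; circles at consecutive scales $n$ and $n+1$ are separated on the real line by roughly $\tfrac{1}{2^{n+1}}$, which dominates their radii. Conditions~(2) and~(3) are built into the construction. Condition~(4) reduces to the trace computation $|\operatorname{tr}|>2$ for each generator. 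The uniform hyperbolic $\epsilon$ of condition~(5) I obtain by combining Lemma~\ref{l:2.6} for pairs of circles of comparable scale with the observation that smaller isometric circles, being confined to a thin horoball neighborhood of their accumulation point on $\mathbb{R}$, remain hyperbolically far from the larger circles. Proposition~\ref{p:2.11} then yields that $\Gamma_{m,s}$ is a Fuchsian group with geometric Schottky structure.

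Step~3 is essentially routine. Since every generator of $\Gamma_{m,s}$ is hyperbolic, no non-identity element fixes a point of $\mathbb{H}$, and combined with proper discontinuity the action is free. The Ford region $R_0=\bigcap_{f\in\Gamma_{m,s}}\overline{\hat C(f)}$ of equation~\eqref{eq:5} is a locally finite fundamental domain thanks to the uniform Schottky $\epsilon$, so by Remark~\ref{r:2.8} the quotient $S_{m,s}=\mathbb{H}/\Gamma_{m,s}$ inherits a hyperbolic Riemann surface structure. Geodesic completeness follows because $\partial R_0$ consists exactly of the half-circles in $\mathcal{C}$, paired by isometries in $J$: a geodesic reaching such an arc is continued through its paired arc, so no geodesic escapes in finite time.

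Steps~4 and~5 analyze the end structure via Remark~\ref{r:2.4}. I exhaust $S_{m,s}$ by projections of truncated fundamental domains $R_0\cap\{\operatorname{Im}(z)\geq 1/N\}$; the compact exhaustion reduces the count of ends to counting equivalence classes of real-line gaps between the isometric circles of $\mathcal{C}$, modulo the pairings. A case analysis on $s$ shows that the $s-1$ pairings $C(f_t)\leftrightarrow C(f_t^{-1})$ fuse the bounded gaps of $\mathbb{R}\setminus\bigcup\mathcal{C}$ into exactly $s$ classes, matching the classical fact that $\mathbb{H}$ modulo a Schottky group free of rank $s-1$ on hyperbolic generators is a planar surface with $s$ ends. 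For each $k\in\{1,\ldots,m\}$, the two cluster points $\pm(5k-3)$ correspond to a single end of $S_{m,s}$: their containing gaps are identified by some $f_t$ (or coincide, for $k=1$). In every neighborhood of that end, the infinitely many pairings $C(g_{k,n})\leftrightarrow C(g_{k,n}^{-1})$ and $C(h_{k,n})\leftrightarrow C(h_{k,n}^{-1})$ produce non-separating handles, so the end has infinite genus. The remaining $s-m$ ends contain no accumulating isometric circles and are therefore planar. The main obstacle throughout is the combinatorial bookkeeping of Step~4 — tracking precisely which bounded intervals of $\mathbb{R}$ are paired by which $f_t$, and confirming that after quotient the endpoints fuse into exactly $s$ ends of which precisely $m$ are the cluster ends.
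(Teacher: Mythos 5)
Your proposal does not prove the statement it is meant to prove. Proposition \ref{p:2.11} is a general assertion about an \emph{arbitrary} geometric Schottky group: given any Schottky description $\mathfrak{U}(A_k,f_k,I)$ as in Definition \ref{d:2.9}, the group $\Gamma=\langle f_k : k\in I\rangle$ is discrete in $PSL(2,\mathbb{R})$. What you have written is instead a sketch of the proof of Theorem \ref{T:1} for the particular group $\Gamma_{m,s}$ — the construction of the half-circles, the verification of the Schottky description, the end count, the genus of the ends. Moreover, in your Step~2 you explicitly \emph{invoke} Proposition \ref{p:2.11} to conclude that $\Gamma_{m,s}$ is Fuchsian, so read as a proof of that proposition your argument is circular, and read as a proof of Theorem \ref{T:1} it is answering the wrong question. (In the paper itself the proposition is not proved but quoted from Zielicz's thesis.)

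A genuine proof has to establish discreteness for every geometric Schottky group, including infinitely generated ones. The standard route is a ping-pong (Klein combination) argument on the standard fundamental domain $F(\Gamma)=\bigcap_{k\in I}\overline{\hat{C}_{k}}$ of equation \eqref{eq:11}: conditions (1)--(3) of Definition \ref{d:2.9} guarantee that each $f_k$ maps the closed outside of its isometric circle $C_k$ into the closed inside of $C_{-k}$, so any nonempty reduced word $f_{k_1}f_{k_2}\cdots f_{k_r}$ sends $Int(F(\Gamma))$ into $\check{C}_{-k_1}$, which is disjoint from $Int(F(\Gamma))$; hence no nontrivial element of $\Gamma$ can be arbitrarily close to the identity, and $\Gamma$ is discrete (indeed free on the $f_k$). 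The uniform $\epsilon$ of condition (5) is precisely what keeps this argument valid when the circles $C_k$ accumulate on the real line, which is the new difficulty in the infinitely generated case. None of this appears in your write-up, so the proposition remains unproved.
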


The \emph{standard fundamental domain} for the Geometric Schottky group $\Gamma$ having Schottky description  $\mathfrak{U}(A_k,f_k, I)$ is the intersection of all outside of the half-circle associated to the transformations  $f_k$, \emph{i.e.},
\begin{equation}\label{eq:11}
F(\Gamma):=\bigcap_{k\in I} \overline{\hat{C}_{k}}\subset \mathbb{H}.
\end{equation}

\begin{proposition}\label{p:2:12}
\cite[Proposition 2. p. 33]{Ziel} The standard fundamental domain $F(\Gamma)$ is a fundamental domain for the Geometric Schottky group $\Gamma$.
\end{proposition}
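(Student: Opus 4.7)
The plan is to verify the two defining conditions of Definition \ref{d:2.7} for $R:=F(\Gamma)=\bigcap_{k\in I}\overline{\hat{C}_k}$: disjointness of translated interiors via the Ping-Pong Lemma, and covering by comparison with the Ford region $R_0$ recalled after equation \eqref{eq:5}.

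For condition (ii), I would argue by induction on word length. Let $z\in\mathrm{Int}(F(\Gamma))$, so that $z\in\hat{C}_k$ strictly for every $k\in I$, and let a nontrivial $f\in\Gamma$ be expressed in reduced form $f=f_{k_n}\circ f_{k_{n-1}}\circ\cdots\circ f_{k_1}$ (meaning $k_{i+1}\neq -k_i$ for every $i$). The claim to prove is $f(z)\in\check{C}_{-k_n}$. The base $n=1$ is immediate because $f_{k_1}$ swaps the strict exterior $\hat{C}_{k_1}$ with the strict interior $\check{C}_{-k_1}$ of the paired isometric circle. For the inductive step, set $z_{n-1}:=f_{k_{n-1}}\circ\cdots\circ f_{k_1}(z)\in\check{C}_{-k_{n-1}}$; since the word is reduced we have $k_n\neq -k_{n-1}$, and condition (1) of Definition \ref{d:2.9} (disjointness of the closures $\overline{A_k}$) forces the half-circles $C_{-k_{n-1}}$ and $C_{k_n}$ to rest over disjoint real intervals and hence have disjoint closed insides in $\mathbb{H}$, so $\check{C}_{-k_{n-1}}\subset\hat{C}_{k_n}$. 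Consequently $z_{n-1}\in\hat{C}_{k_n}$ and $f(z)=f_{k_n}(z_{n-1})\in\check{C}_{-k_n}$, which is disjoint from $F(\Gamma)$, proving condition (ii).

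For condition (i), I would deduce the covering from the classical Ford region theorem cited after equation \eqref{eq:5}. Its hypothesis is that every nontrivial $f\in\Gamma$ has $c\neq 0$, equivalently that $\infty$ is fixed by no nontrivial element of $\Gamma$. This I can verify by reapplying the Ping-Pong mechanism to the point $\infty$: since each $A_k$ has finite length, $\infty$ lies in the strict exterior of every $C_k$, so for any nontrivial $f=f_{k_n}\circ\cdots\circ f_{k_1}$ the same induction yields $f(\infty)\in\check{C}_{-k_n}\subset\mathbb{H}$, and in particular $f(\infty)\neq\infty$. The Ford region $R_0=\bigcap_{f\in\Gamma}\overline{\hat{C}(f)}$ is therefore well-defined and is a fundamental domain for $\Gamma$. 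Since $\{f_k:k\in I\}\subset\Gamma$, the intersection defining $R_0$ is over a superset of the one defining $F(\Gamma)$, giving $R_0\subseteq F(\Gamma)$. Hence
\begin{equation*}
\mathbb{H}=\bigcup_{f\in\Gamma}f(R_0)\subseteq\bigcup_{f\in\Gamma}f(F(\Gamma))\subseteq\mathbb{H},
\end{equation*}
which is condition (i).

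The main obstacle is executing the Ping-Pong induction cleanly at the step $\check{C}_{-k_{n-1}}\subset\hat{C}_{k_n}$: one must use the disjoint-closures condition (1) together with the non-nesting it enforces between half-circles over disjoint intervals on $\mathbb{R}$, so as to rule out the possibility that $z_{n-1}$ lands inside some nested $\check{C}_{k_n}$. Once this is in place, both the interior-disjointness of the translates of $F(\Gamma)$ and the auxiliary fact that $\infty$ has trivial stabilizer flow from the same induction, and the covering assertion then follows painlessly from the inclusion $R_0\subseteq F(\Gamma)$.
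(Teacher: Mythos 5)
The paper does not actually prove this proposition: it is quoted verbatim from Zielicz's thesis (\cite[Proposition 2, p.~33]{Ziel}) and used as a black box, so there is no in-paper argument to compare yours against. Judged on its own, your proof is correct and is essentially the classical Schottky/Ford argument. The ping-pong induction for condition (ii) is sound: the key nesting step $\check{C}_{-k_{n-1}}\subset\hat{C}_{k_n}$ does follow from condition (1) of Definition \ref{d:2.9}, because two half-circles lying over disjoint closed intervals of $\mathbb{R}$ have disjoint closed Euclidean half-disks, and the base case is the standard fact that a M\"obius transformation carries the exterior of its isometric circle onto the interior of the isometric circle of its inverse (condition (3) of the Schottky description identifies these circles with $C_{k}$ and $C_{-k}$). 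Your treatment of condition (i) is the main point of divergence from what a direct proof (such as the one in \cite{Ziel}) would do: rather than showing directly that every orbit meets $F(\Gamma)$ --- typically by noting that $z\in\check{C}(f)$ forces $Im(f(z))>Im(z)$ and using the $\epsilon$-separation of condition (5) to guarantee that the supremum of imaginary parts over an orbit is attained --- you outsource the covering to the Ford-region theorem recalled after equation \eqref{eq:5} and then use the trivial inclusion $R_0\subseteq F(\Gamma)$. This is legitimate, but you should make two dependencies explicit: you need Proposition \ref{p:2.11} (discreteness) as a hypothesis of the Ford theorem, and your verification that no nontrivial element fixes $\infty$ requires running the ping-pong on the boundary, i.e.\ on the closed half-disks in $\overline{\mathbb{H}}=\mathbb{H}\cup\mathbb{R}\cup\{\infty\}$ rather than on the open insides in $\mathbb{H}$, since $f_{k_1}(\infty)=a_{k_1}/c_{k_1}$ is a real point. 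With those two remarks inserted, the argument is complete; note also that condition (5) of the Schottky description, which a direct proof uses explicitly, enters your proof only implicitly through Proposition \ref{p:2.11}.
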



\section{Proof of main result} \label{section3}

\textbf{Step 1. Building the group $\Gamma$.} Given that we shall construct a Riemann Surface with $s$ ends for $s\in \mathbb{N}$,  then for each $t\in\{1,\ldots,s-1\}$ we denote as  $C(f_t):=\{z\in\mathbb{H}: |z-5t|=1\}$ and $C(f_t^{-1}):=\{z\in\mathbb{H}: |z+5t|=1\}$  the half-circle having center $5t$ and $-5t$ in the real line, respectively, and the same radius equality to $1$ (see Figure \ref{step_1_1}). Let $f_t$ and $f^{-1}_t$ be the M\"{o}bius transformations having as isometric circle the half-circle $C(f_t)$ and $C(f_t^{-1})$ respectively. Using Remark \ref{r:2.5} we compute and get  the values of the coefficients of $f_t$ and $f_t^{-1}$, it means 
\begin{equation}\label{eq:12}
f_t(z)=\dfrac{-5tz+(25t^2-1)}{z-5t}, \quad f_t^{-1}(z)=\dfrac{-5tz-(25t^2-1)}{-z-5t}.
\end{equation}

\begin{figure}[h]
\begin{center}
\includegraphics[scale=0.58]{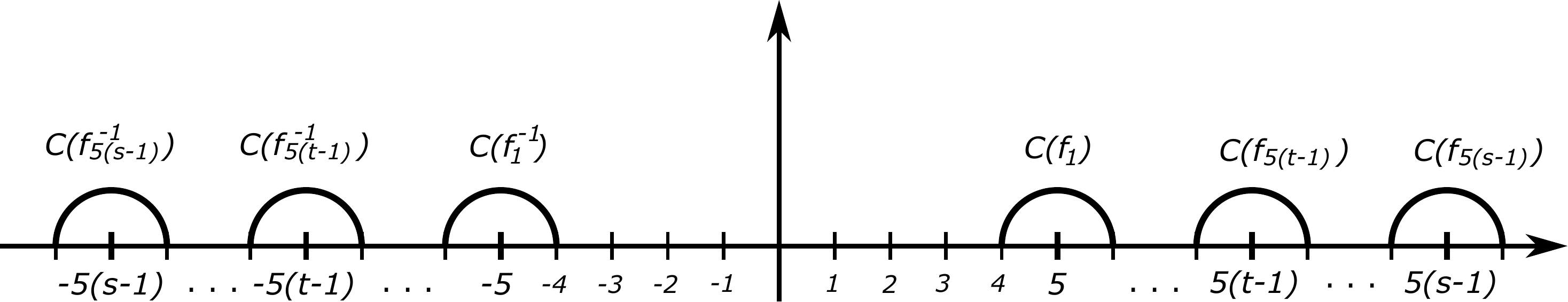}\\
  \caption{\emph{Half-circles $C(f_t)$ and $C(f_t^{-1})$ for each $t\in\{1,\ldots, s-1\}$.}}
   \label{step_1_1}
\end{center}
\end{figure}

\begin{remark}
For each $t\in\{1,\ldots,s-1\}$, we note that the M\"{o}bius transformations $f_t$ and  $f_t^{-1}$  are both hyperbolic.
\end{remark}

In order to hold the Riemann surface having $m\leqslant s$ ends with infinite genus, it is necessary to introduce the following infinite families of half-circles and M\"{o}bius transformations. 

For each $k\in\{1,\ldots,m\}$ we consider the closed intervals $I_k:=[5k-3,5k-2]$ and $\hat{I}_k:=[-(5k-2),-(5k-3)]$ on the real line, which can be written as the following union of closed subintervals
\begin{equation}\label{eq:14}
\begin{array}{ccl}
I_k & = & \bigcup\limits_{n\in\mathbb{N}} \left[5k-3+\dfrac{1}{2^{n}},5k-2+\dfrac{1}{2^{n-1}}\right],\\
\hat{I}_k & = &\bigcup\limits_{n\in\mathbb{N}} \left[-\left(5k-3+\dfrac{1}{2^{n}}\right), -\left(5k-2+\dfrac{1}{2^{n-1}}\right)\right]. 
\end{array}
\end{equation}

We shall introduce the following notation for the closed subintervals in equation (\ref{eq:14}) (see Figure \ref{step_1_2}).
\begin{equation}\label{eq:15}
\begin{array}{ccl}
I_{k,n}&:=&\left[5k-3+\dfrac{1}{2^{n}},5k-2+\dfrac{1}{2^{n-1}}\right],\\
\hat{I}_{k,n}&:=&\left[-\left(5k-3+\dfrac{1}{2^{n}}\right), -\left(5k-2+\dfrac{1}{2^{n-1}}\right)\right].
\end{array}
\end{equation}

\begin{figure}[h]
\begin{center}
\includegraphics[scale=0.6]{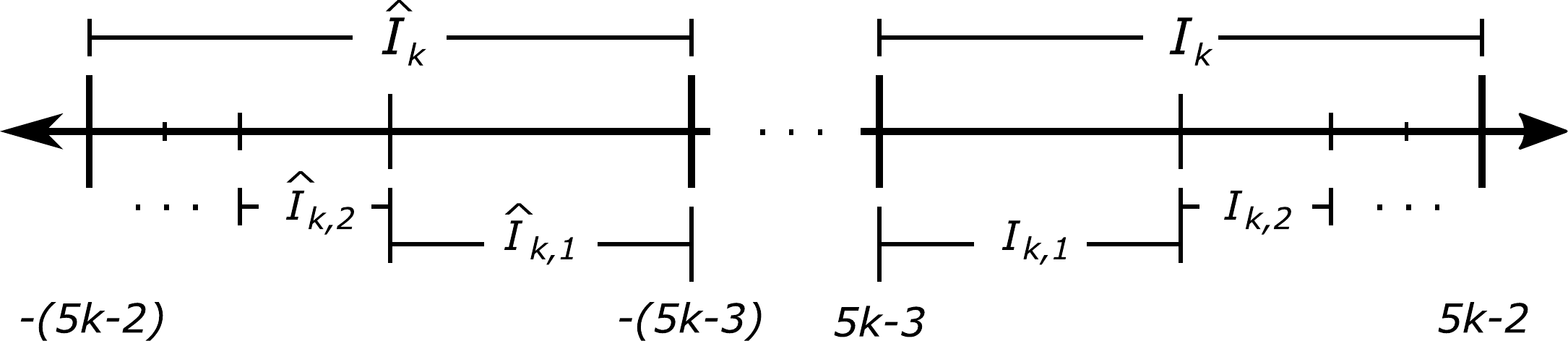}\\
  \caption{\emph{Closed intervals $I_k$ and $\hat{I}_k$ as the union of closed subintervals.}}
   \label{step_1_2}
\end{center}
\end{figure}

\begin{remark}
By construction, the intervals  $I_{k,n}$ and $\hat{I}_{k,n}$ have length $\frac{1}{2^n}$, for each $n\in\mathbb{N}$. Moreover, they are symmetric
 with respect to the imaginary axis. 
\end{remark}

Now, we consider the following two points in $I_{k,n}$
\begin{equation}\label{eq:16}
\begin{array}{cclcl}
\alpha_{k,n} & := & 5k-3+\dfrac{1}{2^n}+\dfrac{3}{2^n\cdot 10}& = &\dfrac{13+(5k-3)\cdot 2^n\cdot 10}{2^n\cdot 10} ,\\
  &&\\
\beta_{k,n} & := & 5k-3+\dfrac{1}{2^n}+\dfrac{7}{2^n\cdot 10}& = & \dfrac{17+(5k-3)\cdot 2^n\cdot 10}{2^n\cdot 10}. \\    
\end{array}
\end{equation}
Analogously, we consider the following two points  in $\hat{I}_{k,n}$
\begin{equation}\label{eq:17}
\begin{array}{ccc}
\alpha_{k,n}^{-1} :=  -\dfrac{17+(5k-3)\cdot 2^n\cdot 10}{2^n\cdot 10},
&&
\beta_{k,n}^{-1} := -\dfrac{13+(5k-3)\cdot 2^n\cdot 10}{2^n\cdot 10}.   
\end{array}
\end{equation}
Note that $\alpha_{k,n}^{-1}$  is not equal to $-\alpha_{k,n}$, but to $-\beta_{k,n}$. Then we denote  as   $C(g_{k,n})$, $C(g_{k,n})$ $C(h_{k,n})$, and $C(h_{k,n}^{-1})$ the half-circles with center in $\alpha_{k,n}$, $\alpha_{k,n}^{-1}$, $\beta_{k,n}$, and $\beta_{k,n}^{-1}$ respectively, and radius $r_{n}:=\frac{1}{2^n \cdot 10}$ (see Figures \ref{step_1_3} and \ref{step_1_4}).

\begin{figure}[h]
\begin{center}
\includegraphics[scale=0.6]{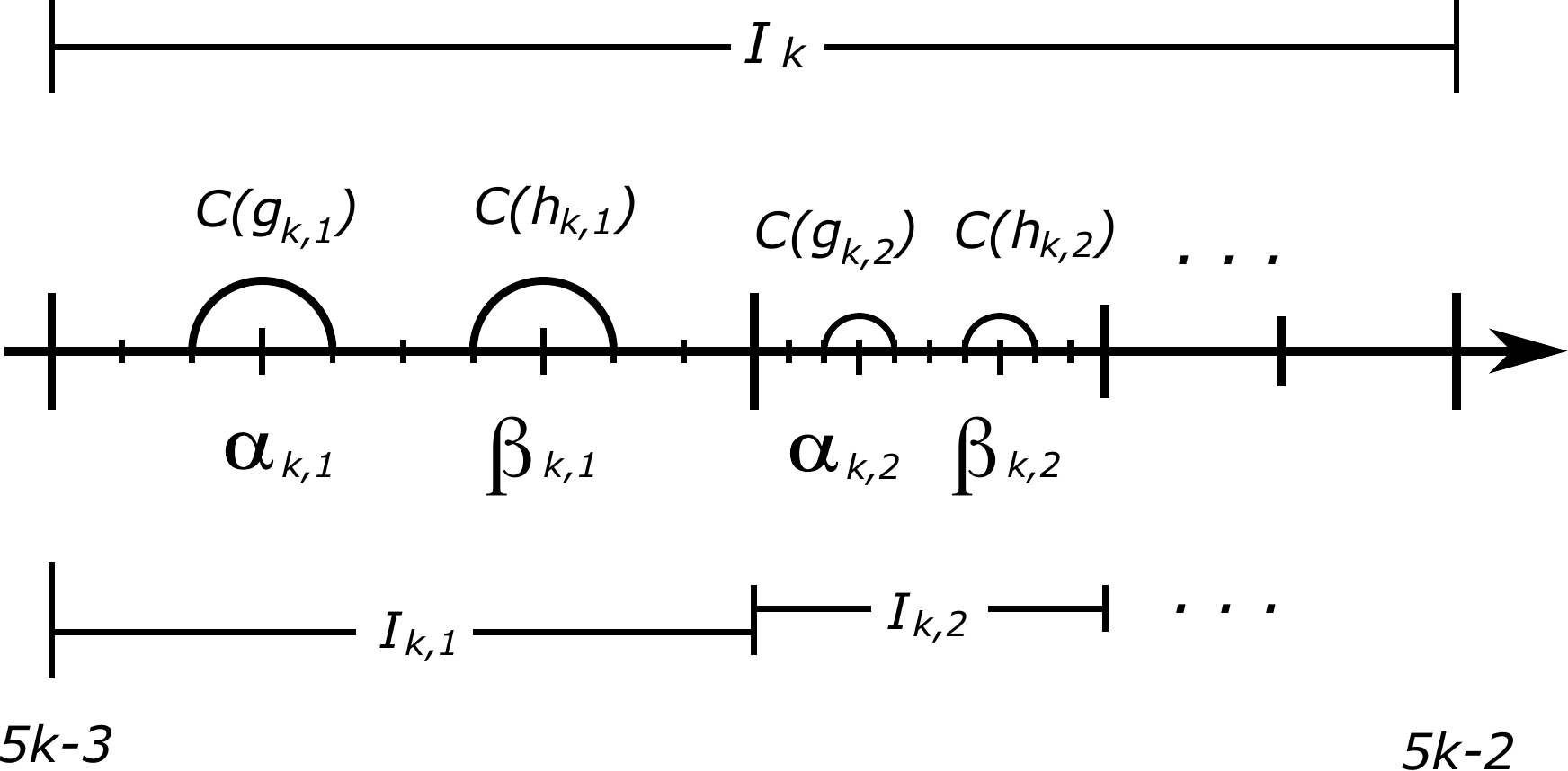}\\
  \caption{\emph{Location of half-circles denoted as $C(g_{k,n})$ and $C(h_{k,n})$.}}
   \label{step_1_3}
\end{center}
\end{figure}

\begin{figure}[h]
\begin{center}
\includegraphics[scale=0.6]{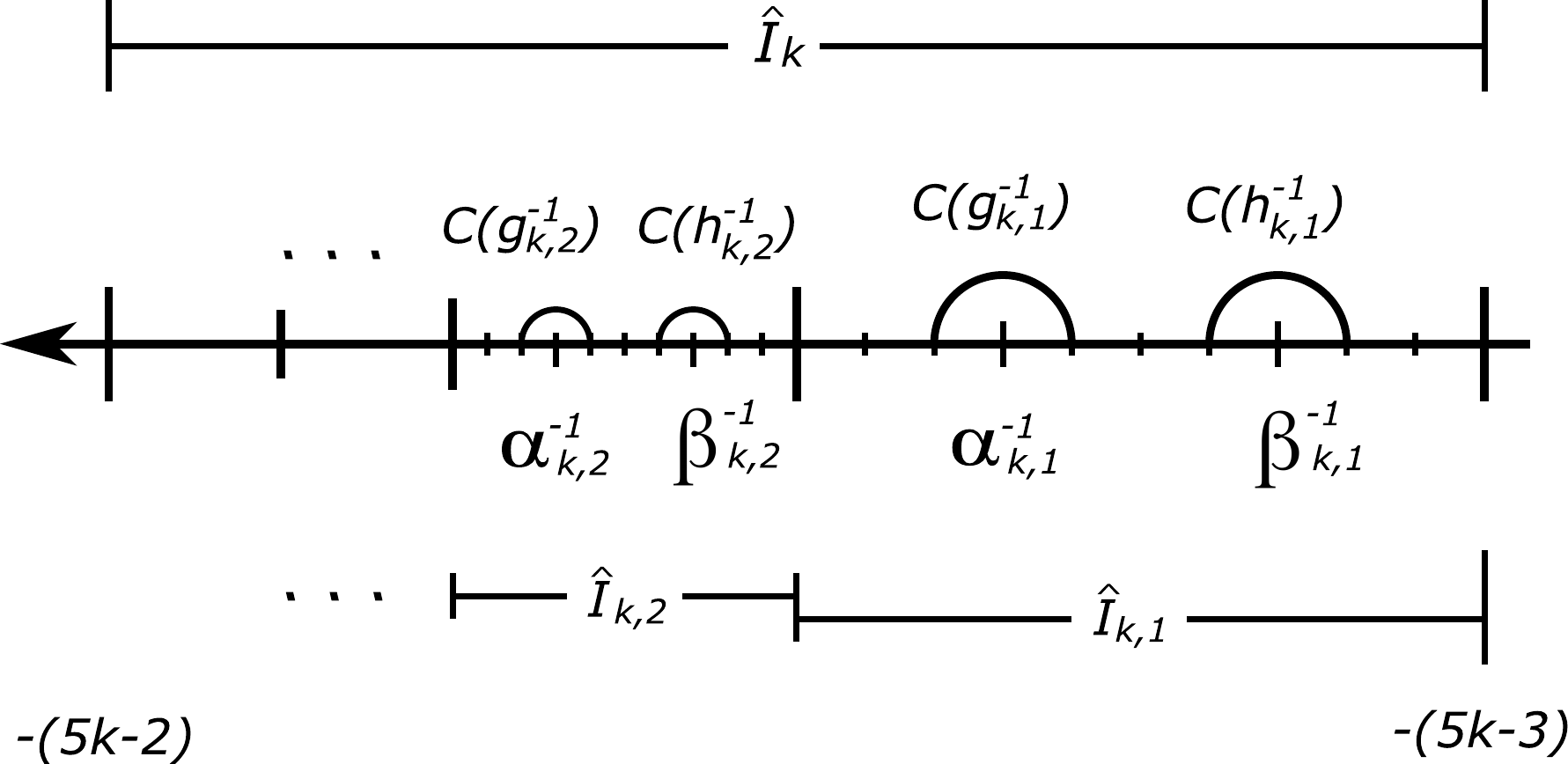}\\
  \caption{\emph{Location of half-circles $C(g^{-1}_{k,n})$ and $C(h^{-1}_{k,n})$.}}
   \label{step_1_4}
\end{center}
\end{figure}

Now, we shall calculate the values of the coefficients of the M\"{o}bius transformation $g_{k,n}(z)=\dfrac{a_{k,n}z+b_{k,n}}{c_{k,n}z+d_{k,n}}$ and its respective inverse
$g_{k,n}^{-1}(z)=\dfrac{d_{k,n}z-b_{k,n}}{-c_{k,n}z+a_{k,n}}$, which have as isometric circles the half-circles $C(g_{k,n})$ and $C(g^{-1}_{k,n})$, respectively. By Remark \ref{r:2.5} we get $c_{k,n} = 2^n\cdot 10$, 
$a_{k,n} =-(17+(5k-3)\cdot 2^n\cdot 10)$ and $d_{k,n}= -(13+(5k-3)\cdot 2^n\cdot 10)$. Now, we substitute these values in the determinant $a_{1,n}d_{1,n}-b_{1,n}c_{1,n}=1$, and computing we hold
\begin{equation*}\label{eq:19a}
b_{k,n}=\dfrac{(17+(5k-3)\cdot 2^n\cdot 10)(13+(5k-3)\cdot 2^n\cdot 10)-1}{2^n\cdot 10}.
\end{equation*}
Hence, we get
\begin{equation}\label{eq:20}
\begin{array}{ccc}
g_{k,n}(z)&=&\dfrac{-(17+(5k-3)\cdot 2^n\cdot 10)z+\dfrac{(17+(5k-3)\cdot 2^n\cdot 10)(13+(5k-3)\cdot 2^n\cdot 10)-1}{2^n\cdot 10}}{2^n\cdot 10z-(13+(5k-3)\cdot 2^n\cdot 10)},\\
&&\\
g^{-1}_{k,n}(z) & =&\dfrac{-(13+(5k-3)\cdot 2^n\cdot 10)z-\dfrac{(17+(5k-3)\cdot 2^n\cdot 10)(13+(5k-3)\cdot 2^n\cdot 10)-1}{2^n\cdot 10}}{-2^n\cdot 10z-(17+(5k-3)\cdot 2^n\cdot 10)}.
\end{array}
\end{equation} 

\begin{remark}\label{r:3.4}
The M\"{o}bius transformations $g_{k,n}$ and $g_{k,n}^{-1}$  are hyperbolic.
\end{remark}

Likewise, we shall calculate the the values of the coefficients of the M\"{o}bius transformation $h_{k,n}(z)=\dfrac{a_{k,n}z+b_{k,n}}{c_{k,n}z+d_{k,n}}$ and its respective inverse
$h_{k,n}^{-1}(z)=\dfrac{d_{k,n}z-b_{k,n}}{-c_{k,n}z+a_{k,n}}$, which have as isometric circles the half-circles $C(h_{k,n})$ and $C(h^{-1}_{k,n})$, respectively. By Remark \ref{r:2.5} we get $c_{k,n} = 2^n\cdot 10$, $ 
a_{k,n}  = -(13+(5k-3)\cdot 2^n\cdot 10)$ and $d_{k,n}=-(17+(5k-3)\cdot2^{n}\cdot 10)$. Now, we substitute these values in the determinant $a_{k,n}d_{k,n}-b_{k,n}c_{k,n}=1$, and computing we hold
$$
b_{k,n}=\dfrac{(17+(5k-3)\cdot 2^{n}\cdot 10)(13+(5k-3)\cdot 2^{n}\cdot 10)-1}{2^n\cdot 10}.
$$
Hence, we get
\begin{equation}\label{eq:23}
\begin{array}{ccc}
h_{k,n}(z)&=&\dfrac{-(13+(5k-3)\cdot2^{n}\cdot 10)z+\dfrac{(17+(5k-3)\cdot 2^{n}\cdot 10)(13+(5k-3)\cdot2^{n}\cdot 10)-1}{2^n\cdot 10}}{2^n\cdot 10z-(17+(5k-3)\cdot 2^{n}\cdot 10)},\\
&&\\
h^{-1}_{k,n}(z)&=&\dfrac{-(17+(5k-3)\cdot 2^{n}\cdot 10)z-\dfrac{(17+(5k-3)\cdot 2^{n}\cdot 10)(13+(5k-3)\cdot 2^{n}\cdot 10)-1}{2^n\cdot 10}}{-2^n\cdot 10z-(13+(5k-3)\cdot 2^{n}\cdot 10)}.\\
\end{array}
\end{equation}

\begin{remark}\label{r:3.5}
The M\"{o}bius transformations $h_{k,n}$ and $h_{k,n}^{-1}$  are hyperbolic.
\end{remark}

From the previous construction of M\"{o}bius transformations and their respectively half-circles (see equations (\ref{eq:12}), (\ref{eq:20}), and (\ref{eq:23})) we define the sets
\begin{equation}\label{eq:24}
\begin{array}{l}
\mathcal{J}:=\{f_t, \,f_t^{-1}, \,g_{k,n}, \,g^{-1}_{k,n}, \,h_{k,n}, \,h^{-1}_{k,n}:t\in\{1,\ldots,s-1\}, k\in\{1,\ldots,m\}, n\in\mathbb{N}\},\\
\\
\mathcal{C} :=\{C(f_t), C(f_t^{-1}), C(g_{k,n}), C(g^{-1}_{k,n}), C(h_{k,n}), C(h^{-1}_{k,n}):t\in\{1,\ldots,s-1\}, k\in\{1,\ldots,m\}, n\in\mathbb{N}\}.
\end{array}
\end{equation}
Then let $\Gamma_{m,s}$ be  the subgroup of $PSL(2,\mathbb{R})$ generated by $\mathcal{J}$. By construction the elements of $\Gamma_{m,s}$ are hyperbolic and the half-circle of $\mathcal{C}$ are pairwise disjoint.

\textbf{Step 2. The group $\Gamma_{m,s}$ is a Fuchsian group.} We will prove that $\Gamma_{m,s}$ is a Geometric Schottky group \emph{i.e.}, we shall define a Schottky description for $\Gamma_{m,s}$. Hence, by Proposition \ref{p:2.11} we will conclude that $\Gamma_{m,s}$ is Fuchsian.

We consider $P=\{p_n\}_{n\in\mathbb{N}}$ the set conformed by all primes numbers. Hence,  the map $\psi : \mathcal{J}\to \mathbb{Z}$ defined by
\[
\begin{array}{rclcrcl}
f_t&\mapsto& p_1^t,& &f_t^{-1}&\mapsto& -p_1^{t},\\
g_{k,n}&\mapsto& p_2\cdot p_{4+n}^k, && g^{-1}_{k,n}&\mapsto& -p_2\cdot p_{4+n}^k,\\
h_{k,n}&\mapsto& p_3\cdot p_{4+n}^k,&& h^{-1}_{k,n}&\mapsto& -p_3\cdot p_{4+n}^k,\\
\end{array}
\]
is injective, for every $t,k,n\in\mathbb{N}$. We remark that $I:=\psi(\mathcal{J})$, the image of $\mathcal{J}$ under $\psi$ is a symmetric subset of the integers numbers, then for each element $k$ of $I$ there is a unique M\"{o}bius transformation $f$ belonging to $\mathcal{J}$ such that $\psi(f)=k$, then we label the transformation $f$ as $f_k$. Therefore, we re-write the sets in the equation (\ref{eq:24}) as
\begin{equation}\label{eq:25}
\begin{array}{ccc}
\mathcal{J}:=\{f_k: k\in I\},&&\
\mathcal{C} :=\{C(f_k): k\in I\}.
\end{array}
\end{equation}
Similarly, the center and radius of each half circle $C(f_k)$ is re-written as $\alpha_k$ and $r_k$, respectively. Now, we define the set $\{A_k:k\in I\}$ being $A_k$ the straight segment in the real line $\mathbb{R}$ having as ends point the same ones of the half circle $C(f_k)$. So, we shall prove that the pair
\begin{equation}\label{eq:26}
\mathcal{U}(A_k,f_k,I):=(\{A_k\},\{f_k\})_{k\in I}
\end{equation} 
is a Schottky description. We note that by the inductive construction of the family $\mathcal{J}$ described above (see equation (\ref{eq:25})), the object $\mathcal{U}(A_k,f_k,I)$ satisfies conditions 1 to 4  in Definition \ref{d:2.9}. Thus, we must only prove that the requirement 5 of the Definition \ref{d:2.9} is done. 

Given the  M\"{o}bius transformation $f_k\in \mathcal{J}$ and its respective half circle $C(f_k)\in\mathcal{C}$ we construct the open vertical strip $M_k:=\{ z\in\mathbb{H}: \alpha_{k}-2r_k < Re(z) < \alpha_{k}+2r_k\}$, being $\alpha_k$ and $r_{k}$ the center and radius of $C(f_k)$, respectively. 

\begin{remark}
We have the following facts.  
\begin{enumerate}
\item The half circle $C(f_k)$ is contained into the open strips $M_k$ \emph{i.e.}, $C(f_k)\subset M_k$.

\item The $\epsilon$-neighborhood $B_{\epsilon}^k$ of the half circle $C(f_k)$ is contained into the open strips $M_k$, choosing $\epsilon<\frac{r_k}{2}$.

\item For any two transformations $f_k\neq f_j\in \mathcal{J}$ the intersection of their open strips associated is empty, $M_k\cap M_j =\emptyset$.
\end{enumerate}
\end{remark}

On the other hand, there is an element $i\in I$ such that the radius $r_i$ of the half circle $C(f_i)\in\mathcal{C}$ is equal to $1$ \emph{i.e.}, $r_i=1$. So, $r_k\leq r_i=1$ for all $k\in I$. Then, by construction we have the relation $| \alpha_k -\alpha_i | > r_k+ r_i=r_k+1, \text{ for all } k\in I$, applying  Lemma \ref{l:2.6} we have that for every $\epsilon<\frac{\min \{r_k,r_i\}}{2}=\frac{r_i}{2}=\frac{1}{2}$ the closure of the $\epsilon$-neighborhoods $B_{\epsilon}^k$ and $B_{\epsilon}^i$ of the half circles $C(f_k)$ and $C(f_i)$ are disjoint. Moreover, this lemma assures that the closure of the $\epsilon$-neighborhood $B_{\epsilon}^k$ is contained in the open strip $M_{k}$. This implies that the closed hyperbolic $\epsilon$-neighborhood $B_{\epsilon}^k$, $k\in I$, are pairwise disjoint, fixing $\epsilon<\frac{1}{2}$. Therefore, requirement 5 of  Definition \ref{d:2.9} is done, then we can conclude that $\mathcal{U}(A_k,f_k,I)$ is a Schottky description. Finally, we consider the set $\mathcal{J}$ described in equation (\ref{eq:25}). Then from Proposition \ref{p:2.11} we hold that the Geometric Schottky group $\Gamma_{m,s}:=\langle f_k :k\in I\rangle $ is a Fuchsian group.

\textbf{Step 3. Holding the suitable surface.} We remember that the Fuchsian group $\Gamma_{m,s}$ defined above acts freely and properly discontinuously on the subset $\mathbb{H} \setminus W$, being $W:=\{w\in\mathbb{H}: f(w)=w \text{ for any } \Gamma_{m,s}-\{Id\}\}$. Nevertheless, in this case the set $W$ is the empty set, because the half circles belonging to $\mathcal{C}$ are pairwise disjoint. Then, the quotient space 
\begin{equation*}
S_{m,s}:=\mathbb{H}/\Gamma_{m,s}
\end{equation*}
is a geodesic complete Riemann surface via the projection map
$\pi:\mathbb{H}\to S_{m,s}$, such as $z\mapsto [z]$  (see \cite{Sch}). 

\textbf{Step 4. The surface $S_{m,s}$ has $s$ ends.} We claim that the surface $S_{m,s}$ has $s$ ends and exactly  $m$ of them with infinite genus. To prove this, it is necessary to introduce the following construction.

\begin{construction}\label{cons:3.7}
 For the hyperbolic plane $\mathbb{H}$ there is an increasing sequence of connected compact sets ${K_1}\subset {K_2}\subset \ldots$ such that $\mathbb{H}=\bigcup\limits_{l\in\mathbb{N}}{K_l}$, and whose complements define the only one end of the hyperbolic plane. In other words, $
\mathbb{H} \setminus {K_1}\supset \mathbb{H}\setminus {K_2}\supset \ldots
$ the infinite nested sequence of non-empty connected  subsets of $\mathbb{H}$, defines the equivalent class $[\mathbb{H} \setminus {K_l}]_{l\in\mathbb{N}}$, which is the only one end of $\mathbb{H}$ (see Definition \ref{d:2.1}). More precisely, for each $l \in\mathbb{N}$, we define the compact subset
\begin{equation}\label{eq:28}
{K_l}:=\left\{z\in\mathbb{H}: -5(s-1)-l\leqslant Re(z)\leqslant 5(s-1)+l \text{ and } \frac{1}{l}\leqslant Im(z)\leqslant l+1 \right\}.
\end{equation}
We remark that if ${K}$ is a compact subset of $\mathbb{H}$, then there is $l\in\mathbb{N}$ such that ${K}\subset {K_l}$ (see Figure \ref{step_1_5}).  
\begin{figure}[h]
\begin{center}
\includegraphics[scale=0.5]{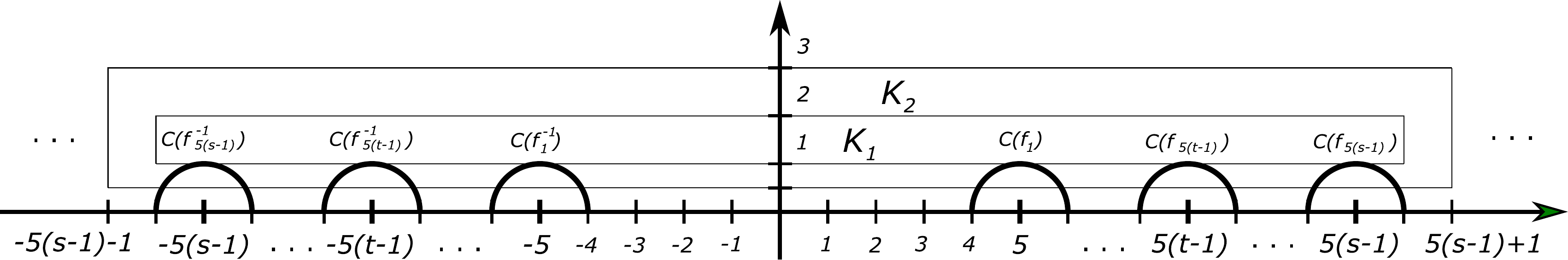}\\
  \caption{\emph{Geometric representation of the compact subset ${K_l}$ on the hyperbolic plane.}}
   \label{step_1_5}
\end{center}
\end{figure}

Now, for each $t\in\{1,\ldots, s\}$ we define the open subset $U_t$ of the hyperbolic plane $\mathbb{H}$ as follows

$\ast$ If  $t\in\{1,\ldots, s-1\}$, then $U_t$ is the union
\begin{equation}
\begin{array}{ccl}
U_{t} & := & \{z\in\mathbb{H}: -5t \leq Re(z)\leq -5(t-1) \text{ and } 0< Im(z)< 1\}\bigcup   \\ 
         &       &\{z\in\mathbb{H}: 5(t-1) \leq Re(z)\leq 5t \text{ and } 0< Im(z)< 1\} .
\end{array}
\end{equation}

$\ast$ Otherwise, if $t=s$, then $U_t=U_s$ is the complement

\begin{equation}
\begin{array}{ccl}
U_t=U_s & :=& \mathbb{H}\setminus \{z\in\mathbb{H}: -5(t-1)\leq Re(z)\leq 5(t-1) \text{ and } 0<Im(z)\leq 1 \}.
\end{array}
\end{equation}

%
%
%
%
We remark that the open subsets $U_1,\ldots,U_s$ are pair disjoint.
\end{construction}

On the other hand, following  Proposition \ref{p:2:12} the standard fundamental domain
\begin{equation}
F(\Gamma_{m,s})=\bigcap_{k\in I} \overline{\hat{C}(f_k)}\subset \mathbb{H},
\end{equation}
is a fundamental domain for the Fuchsian group $\Gamma_{m,s}$. We note that the fundamental domain $F(\Gamma_{m,s})$ is connected, locally connected and its boundary is the set of half circles $\mathcal{C}$  (see equation (\ref{eq:24})). It is easy to check that the hyperbolic area of $F(\Gamma_{m,s})$ is infinite.

By construction, if $\gamma$ is a horizontal line belonged to the horizontal strip 
$
M:=\{z\in\mathbb{H}:0< Im(z) < 1\},
$
then the image of $\gamma\cap F(\Gamma_{m,s})$ under $\pi$ are $s$ disjoint curves. We shall abuse of the language and the word horocycle would mean horizontal line. Contrary, if $\gamma$ is a horocycle belonged to $\mathbb{H}\setminus M$, then the image of $\gamma\cap F(\Gamma_{m,s})=\gamma$ under the projection $\pi$ is a curve. Considering the objects described in  Construction \ref{cons:3.7}, it implies that for each $t\in\{1,\ldots,s\}$ and each $l\in\mathbb{N}$ the image of $(U_t \setminus {K_l})\cap F(\Gamma_{m,s})$ under $\pi$ is a path-wise connected subset of $S_{m,s}$, then the open subset 
\begin{equation}
C_{t,l}:=\pi((U_t \setminus {K_l})\cap F(\Gamma_{m,s}))\subset S_{m,s}
\end{equation} 
is connected and its boundary  in $S_{m,s}$ is compact for each $l\in\mathbb{N}$ and $t\in\{1,\ldots,s\}$. By definition of ${K_l}$ we hold  that 
$$
S_{m,s}\setminus \pi(K_l)=S_{m,s} \setminus \pi(K_{l}\cap F(\Gamma_{m,s})) =\bigcup\limits_{t=1}^{s} C_{t,l}.
$$
Let $\hat{K}$ be a compact subset of $S_{m,s}$, by Remark \ref{r:2.4} we must prove that there exist a compact subset $K^{'}$ of $S_{m,s}$ such that $\hat{K} \subset K^{'}$ and  $S_{m,s} \setminus K^{'}$ are exactly $s$ connected component. Let ${K}$ denote the compact subset of $\mathbb{H}$ such that $\pi({K})=\hat{K}$. From Construction \ref{cons:3.7} there is a compact subset ${K_l}$ of the hyperbolic plane $\mathbb{H}$ such that ${K}\subset {K_l}$, for any $l\in\mathbb{N}$.  Let $K^{'}$ be the image of ${K_{l}}\cap F(\Gamma_{m,s})$ under the projection $\pi$, $K^{'}:=\pi({K_{l}}\cap F(\Gamma_{m,s}))$ . Then  we shall prove that $S_{m,s} \setminus K^{'}$ are exactly $s$ connected components, it means that $S_{m,s} \setminus K^{'}$  is the disjoint union 
\[
S_{m,s} \setminus K^{'}=\bigsqcup\limits_{t=1}^{s} C_{t,l}.
\] 
We just should  prove that the subsets $C_{t,l}$ are disjoint. We will proceed by contradiction and will suppose that there is $u\neq t\in\{1,\ldots,s\}$ such that $C_{u,l}\cap C_{t,l}\neq \emptyset$. Let $z$ be an element in the fundamental domain $F(\Gamma_{m,s})$ such that $\pi(z)\in C_{u,l}\cap C_{t,l} $, then $z\in (U_u \setminus {K_l})\cap F(\Gamma_{m,s})$ and $z\in (U_t \setminus {K_l})\cap F(\Gamma_{m,s})$, which implies that $z$ is an element on the intersection of $U_{u}\cap U_{t}$. Clearly, this is a contradiction because  the open subsets $U_{s}$ are disjoint (see Construction \ref{cons:3.7}).  This proves that the surface $S_{m,s}$ has $s$ ends.

\textbf{Step 5. The surface $S_{m,s}$ has $m$ ends with infinite genus.} Given the exhaustion of $\mathbb{H}=\bigcup\limits_{l\in\mathbb{N}} {K_l}$ by compact subsets in  Construction \ref{cons:3.7}, the image of each element of the family $\{{K_l}\cap F(\Gamma_{m,s})\}_{l\in\mathbb{N}}$ under the projection $\pi$ is  a compact subset of $S_{m,s}$, which we denote as $\mathcal{K}_l$. Hence, the family $\{\mathcal{K}_l\}_{l\in\mathbb{N}}$  is also  an exhaustion of $S_{m,s}=\bigcup\limits_{l\in\mathbb{N}} \mathcal{K}_l$ by compact subsets. Using the ideas above, we can write the set 
\[
S_{m,s}\setminus \mathcal{K}_l=\bigsqcup_{t=1}^s C_{t,l},
\]
where each $C_{t,l}$ with $t\in\{1,\ldots,s\}$ and $l\in\mathbb{N}$ is a connected component whose boundary is compact in $S_{m,s}$ and, $C_{t,l}\supset C_{t,l+1}$ . In other words, the ends space $Ends(S_{m,s})$ are all the nested sequences $(C_{t,l})_{l\in\mathbb{N}}$ \emph{i.e.},  $Ends(S_{m,s})=\{[C_{t,l}]_{l\in\mathbb{N}}: t\in\{1,\ldots,s\}\}$. By construction follows that the ends $[C_{t,l}]_{l\in\mathbb{N}}$ whit $t\in\{m+1,\ldots,s\}$ is planar because of the subsurface $C_{t,l}$ is homeomorphic to the cylinder for all $l\in\mathbb{N}$ and all $t\in\{m+1,\ldots,s\}$.  

\begin{figure}[h!]
\begin{tabular}{ccc}
\includegraphics[scale=0.37]{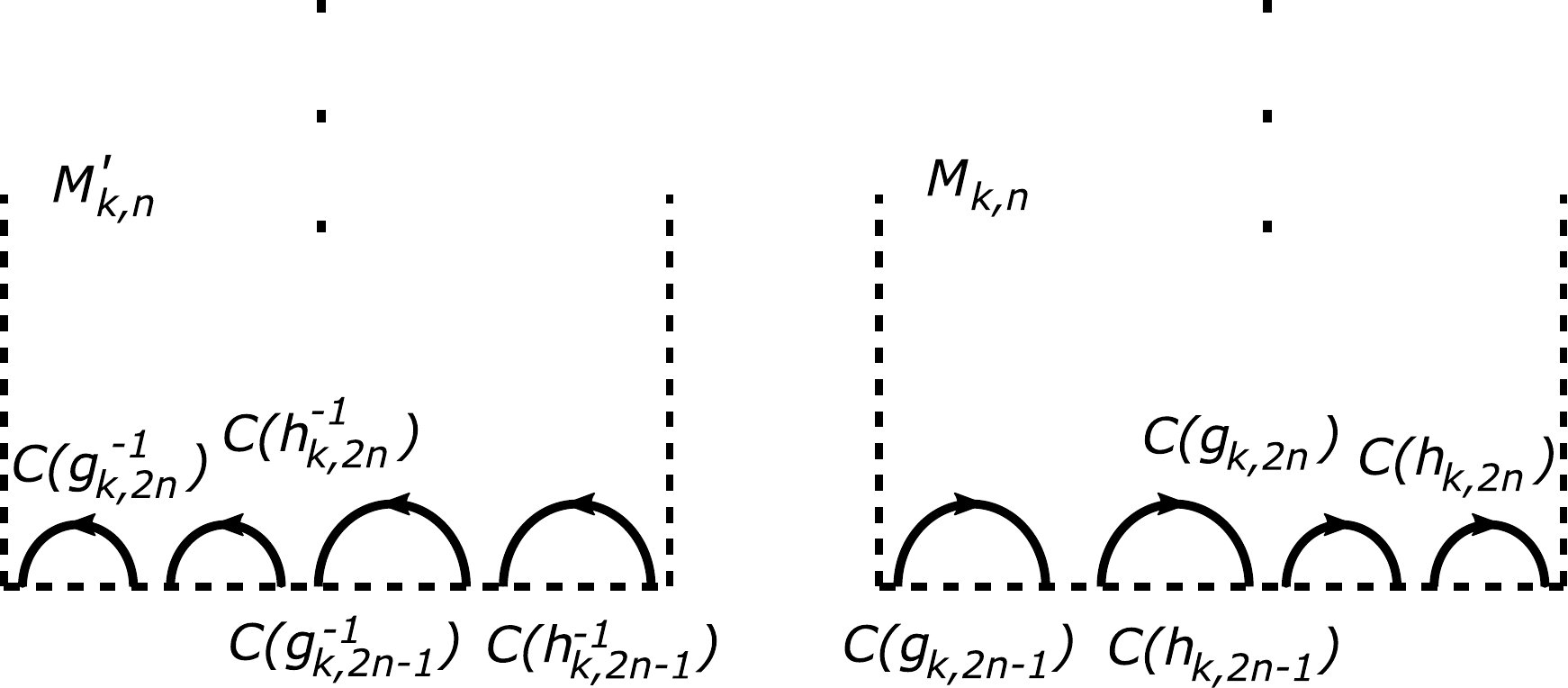}&\includegraphics[scale=0.37]{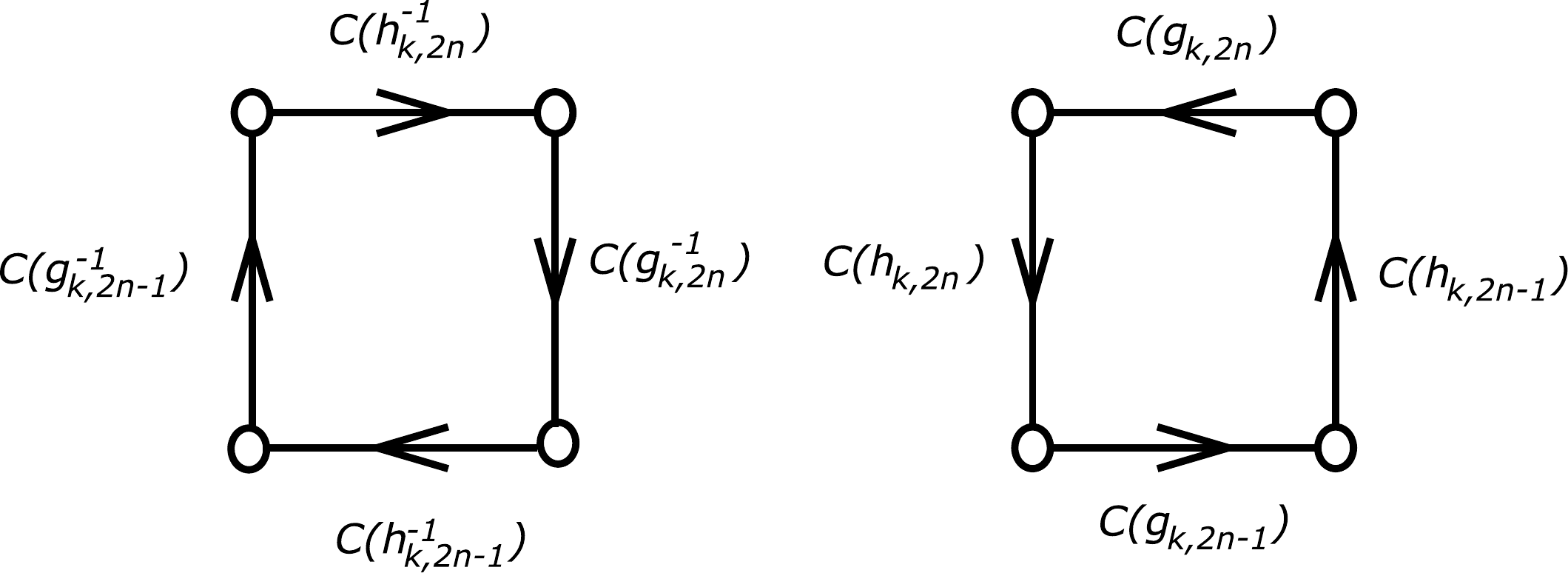}\\
a. The image of $(M_{k,n}\cup \hat{M}_{k,n})\setminus K_1$ under $\pi$ & b. The subsurface $S_k$\\
is the subsurface $S_k$. &is topologically equivalent to\\ & two squares with identifications. \\
&&\\
\includegraphics[scale=0.37]{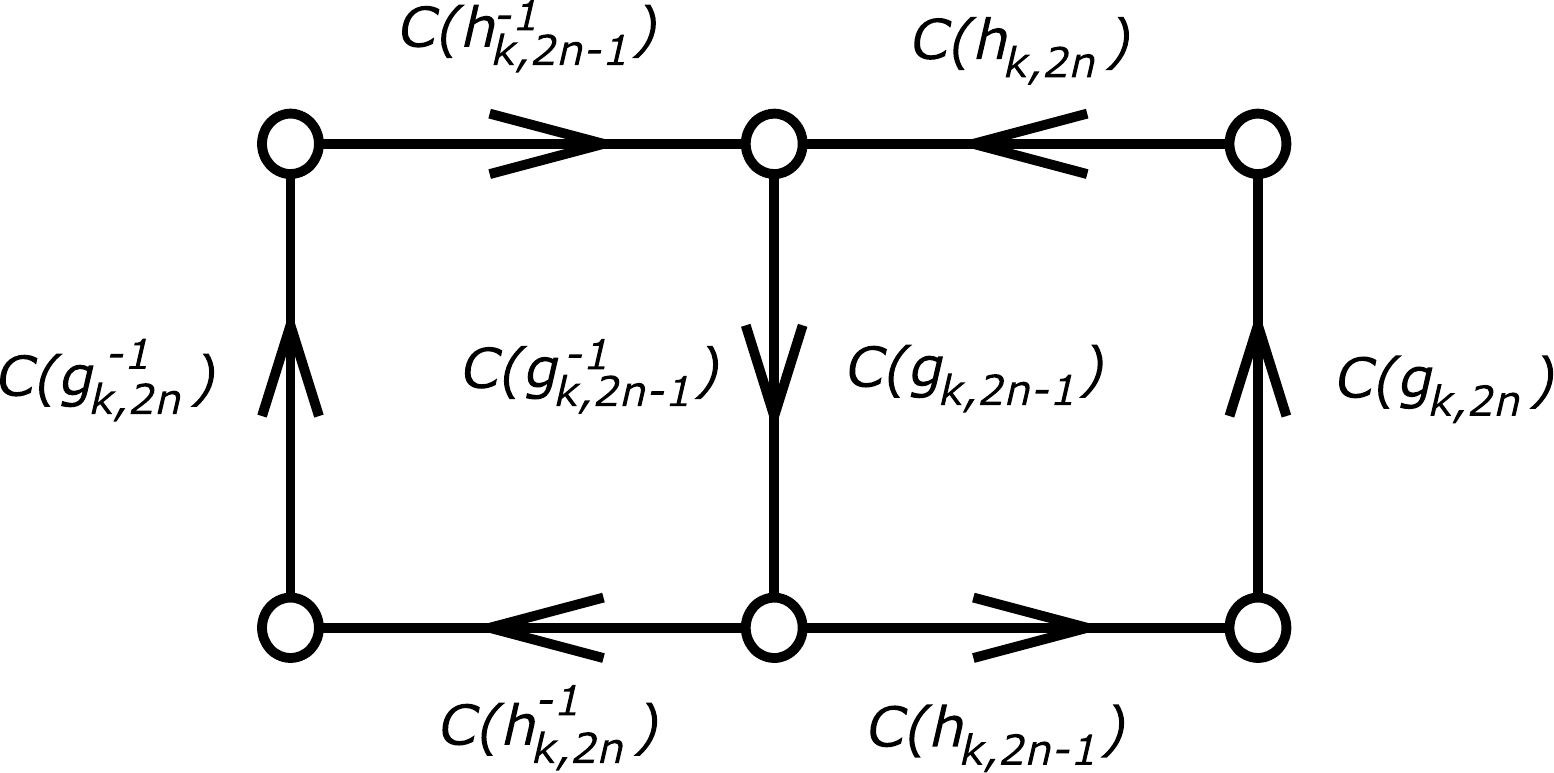}&\includegraphics[scale=0.37]{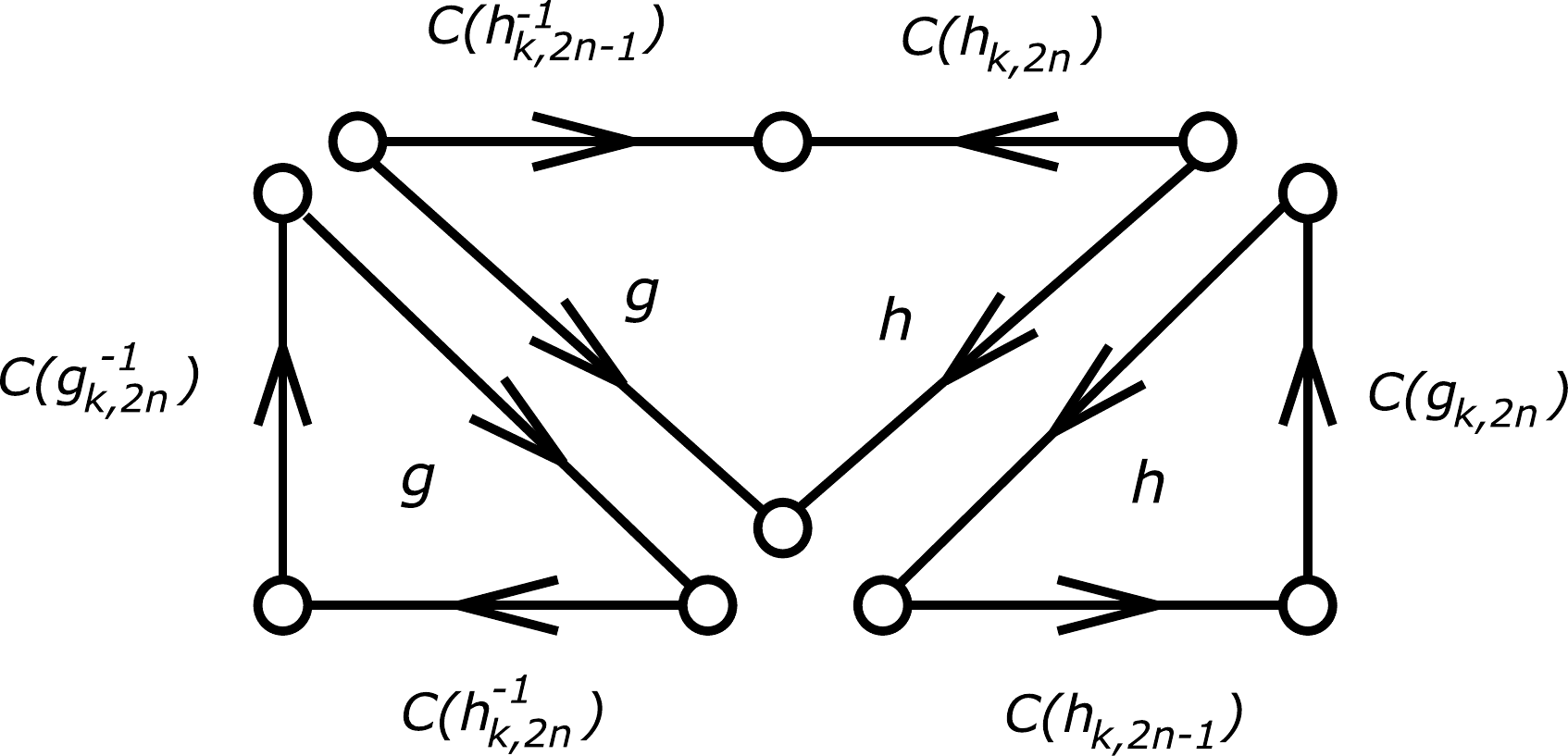}\\
c. Identifying the sides $C(g_{k, 2n}^{-1})$ and $C(h_{k, 2n})$. & d. Cutting we get three triangles.\\
&&\\
\includegraphics[scale=0.37]{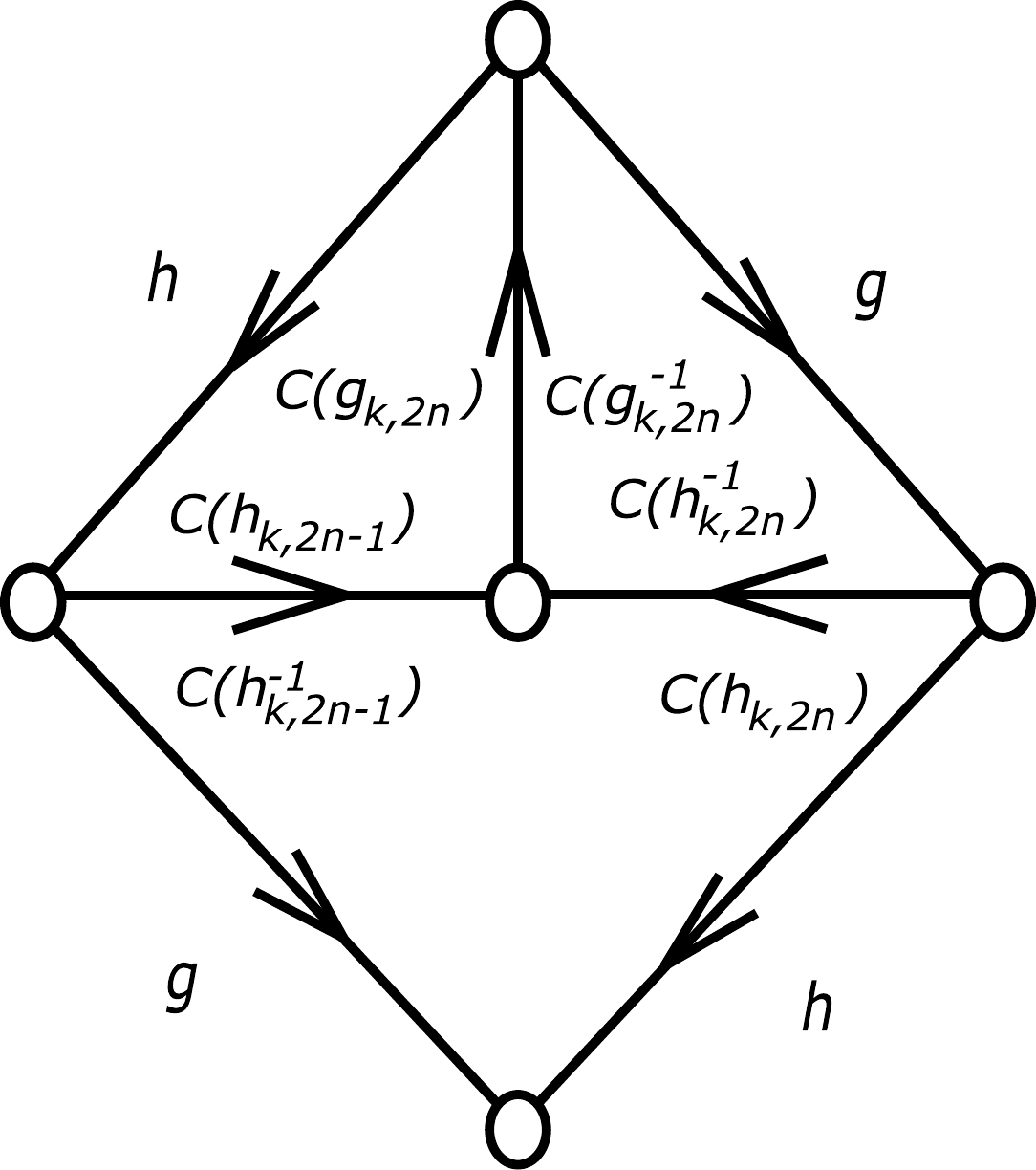}&\includegraphics[scale=0.37]{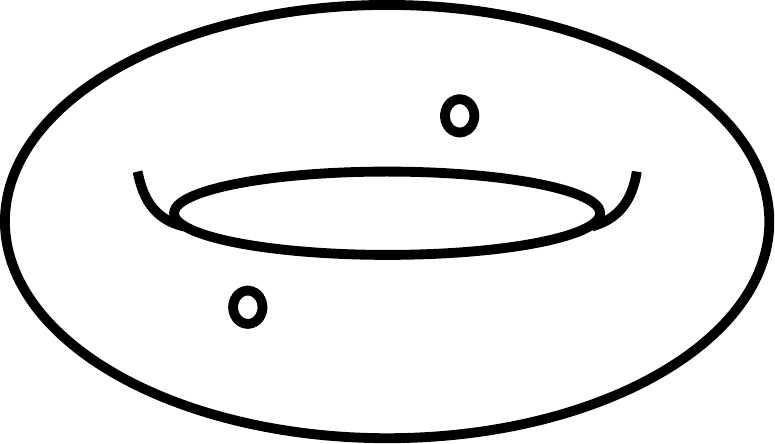}\\
e. Identifying the sides $C(g_{k,2n})$ and $C(g^{-1}_{k,2n})$; & f. Finally is obtained a torus \\
$(C(h_{k,2n-1}))$ and $(C(h^{-1}_{k,2n-1}))$;& with two punctured.\\
$(C(h_{k,2n}))$ and $(C(h^{-1}_{k,2n}))$, respectively. &\\
\end{tabular}
\caption{\emph{Graphic description of the process of taking a fundamental region on the Hyperbolic plane and through the quotient with a geometric Schottky group arise a non-compact Riemann surface.}}
   \label{toro}
\end{figure}

Now, we shall prove that  the end $[C_{k,l}]_{l\in\mathbb{N}}$ with $k\in\{1,\ldots,m\}$ have infinite genus. In other words, we will prove that each subsurface $C_{k,l}$ of $S_{m,s}$ has infinite genus with $k\in\{1,\ldots,m\}$ and $l\in\mathbb{N}$.

Given the end $[C_{k,l}]_{l\in\mathbb{N}}$ and the closed intervals $I_k=\bigcup\limits_{k\in\mathbb{N}}I_{k,n}$ and $\hat{I}_k=\bigcup\limits_{k\in\mathbb{N}}\hat{I}_{k,n}$ (see equations (\ref{eq:14}) and (\ref{eq:15})) with $k\in\{1,\ldots, m\}$, we define the two vertical strips
\[
\begin{array}{ccl}
M_{k,n} & := &\{z\in\mathbb{H}: Re(z)\in Int(I_{k, 2n-1}\cup I_{2k,n})  \text{ and, } 0< Im(z) < 1\}\subset F(\Gamma_{m,s}), \\
\hat{M}_{k,n}& := &\{z\in\mathbb{H}: Re(z)\in Int(\hat{I}_{k, 2n-1}\cup \hat{I}_{2k,n}) \text{ and, } 0< Im(z)<1\}\subset F(\Gamma_{m,s}).
\end{array}
\]

For $l=1$, the image $(M_{k,n}\cup \hat{M}_{k,n}) \setminus {K_1}$ under $\pi$ is a subsurface of $C_{t,1}$, denote as $S_k:=\pi((M_{k,n}\cup \hat{M}_{k,n})\setminus {K_1})$, which is homeomorphic to the torus punctured by $2$ points, for each $k\in\mathbb{N}$ (see Figure \ref{toro}). We remark that if $u\neq k\in\mathbb{N}$, then the subsurface $S_u$ and $S_k$ are disjoint, $S_u\cap S_k=\emptyset$, this implies that the subsurface $C_{t,1}$ of $S_{m,s}$ has infinite genus.


For $l=2$, the image of $(M_{k,n}\cup \hat{M}_{k,n}) \setminus {K_2}$ under $\pi$ is a subsurface of $C_{t,2}$, denote as $S_k:=\pi((M_{k,n}\cup \hat{M}_{k,n})\setminus {K_2})$, which is homeomorphic to the torus punctured by $2$ points, for each $k>1$. We remark that if $u\neq k\in\mathbb{N}$, then the subsurface $S_u$ and $S_k$ are disjoint, $S_u\cap S_k=\emptyset$, this implies that the subsurface $C_{t,2}$ of $S_{m,s}$ has infinite genus.

For each $l\in\mathbb{N}$, the image $(M_{k,n}\cup \hat{M}_{k,n})\setminus {K_l}$ under $\pi$ is a subsurface of $C_{t,l}$, denote as $S_k:=\pi((M_{k,n}\cup \hat{M}_{k,n})\setminus {K_l})$, which is homeomorphic to the torus punctured by $2$ points, for each $k>l$. We remark that if $u\neq k\in\mathbb{N}$, then the subsurface $S_u$ and $S_k$ are disjoint, $S_u\cap S_k=\emptyset$, this implies that the subsurface $C_{t,l}$ of $S_{m,s}$ has infinite genus. Hence we conclude that the end $[C_{k,l}]_{l\in\mathbb{N}}$ has infinite genus, for each $k\in\{1,\ldots,m\}$.

\qed

\begin{coro}
The fundamental group of the Riemann surface $S_{s,m}$ is isomorphic to $\Gamma_{s,m}$.
\end{coro}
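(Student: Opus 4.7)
The plan is to obtain the corollary as an immediate consequence of the construction carried out in the proof of Theorem~\ref{T:1}, combined with the standard covering-space dictionary between deck transformations and fundamental groups.

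First, I would verify that the projection $\pi \colon \mathbb{H} \to S_{m,s}$ is a regular covering map. From Step~3 of the proof of Theorem~\ref{T:1}, the set $W$ of points of $\mathbb{H}$ fixed by some non-identity element of $\Gamma_{m,s}$ is empty, because all generators in $\mathcal{J}$ are hyperbolic (Remarks~\ref{r:3.4} and~\ref{r:3.5}) and their isometric half-circles in $\mathcal{C}$ are pairwise disjoint; hence the action of $\Gamma_{m,s}$ on $\mathbb{H}$ is free. By Proposition~\ref{p:2.11}, $\Gamma_{m,s}$ is Fuchsian, so by the general theory collected in Section~\ref{section2.4} the action is also properly discontinuous. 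A free, properly discontinuous action of a group on a manifold gives a regular (Galois) covering whose group of deck transformations is precisely the acting group $\Gamma_{m,s}$.

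Second, since $\mathbb{H}$ is simply connected, $\pi$ must be the universal covering of $S_{m,s}$. The standard correspondence between the group of deck transformations of the universal cover and the fundamental group of the base (an equivalence of groups, choosing a basepoint $z_0 \in \mathbb{H}$ and its image $[z_0] \in S_{m,s}$) then yields the desired isomorphism $\pi_1(S_{m,s}, [z_0]) \cong \Gamma_{m,s}$. This is exactly the form in which the Uniformization Theorem is recorded in \cite[p.~174]{Bear1}, so the argument essentially reduces to citing that result in the setting established by Theorem~\ref{T:1}.

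I do not anticipate a substantive obstacle: the infinitely generated nature of $\pi_1(S_{m,s})$ (reflecting that $\Gamma_{m,s}$ is infinitely generated as soon as $m \geq 1$) is not an impediment, since the covering-space correspondence between deck transformations and the fundamental group of the base does not depend on finite generation. The only point requiring a moment's care is to confirm that freeness of the action follows from the disjointness of the isometric circles, which has already been secured in Steps~1--3 of the main proof.
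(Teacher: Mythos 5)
Your argument is correct and is essentially the paper's own route: the paper offers no proof beyond citing the Uniformization Theorem \cite[p.174]{Bear1}, and your proposal simply spells out the standard covering-space dictionary that this citation encapsulates (a free, properly discontinuous action on the simply connected $\mathbb{H}$ makes $\pi$ the universal cover with deck group $\Gamma_{m,s}$, hence $\pi_1(S_{m,s})\cong\Gamma_{m,s}$). The only slight imprecision is that freeness requires every non-identity element of $\Gamma_{m,s}$ --- not merely the generators --- to be non-elliptic; this does hold here because geometric Schottky groups are free and purely hyperbolic, as asserted at the end of Step 1 of the paper.
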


\begin{coro}
Let $\Gamma_{s}$ be the subgroup of $PSL(2,\mathbb{R})$ generated by the set $\{f_1,f_1^{-1},\ldots, f_{s-1},f_{s-1}^{-1}\}$ (see equation \ref{eq:12}) then the quotient $S_s:=\mathbb{H}/ \Gamma_{s}$ is complete Riemann surface having exactly $s$ ends and genus zero. Moreover, the fundamental group of the Riemann surface $S_{s}$ is isomorphic to $\Gamma_{s}$.
\end{coro}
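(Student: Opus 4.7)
The plan is to mirror the five-step structure of the proof of Theorem \ref{T:1}, but restricted to the finite generating set $\{f_t,f_t^{-1}:t\in\{1,\ldots,s-1\}\}$ associated with the half-circles $C(f_t)$ and $C(f_t^{-1})$ of radius $1$ centered at $\pm 5t$. First I would check that $\Gamma_s$ is a geometric Schottky group (hence Fuchsian by Proposition \ref{p:2.11}). Since $\mathcal{J}_s:=\{f_t,f_t^{-1}:t\in\{1,\ldots,s-1\}\}$ is a \emph{subset} of the family $\mathcal{J}$ used in Step 2 of Theorem \ref{T:1}, the associated family of intervals $\{A_k:k\in I_s\}$ inherits the disjointness and the uniform $\epsilon$-separation of the half-circles verified there (with $\epsilon<\frac{1}{2}$), so conditions 1--5 of Definition \ref{d:2.9} hold automatically. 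Hence $\Gamma_s=\langle f_k:k\in I_s\rangle$ is a geometric Schottky group, and by Proposition \ref{p:2.11} it is Fuchsian.

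Next, since the half-circles in $\mathcal{C}_s:=\{C(f_t),C(f_t^{-1}):t\in\{1,\ldots,s-1\}\}$ are pairwise disjoint, the fixed-point set $W$ is empty and the action of $\Gamma_s$ on $\mathbb{H}$ is free and properly discontinuous; therefore $S_s=\mathbb{H}/\Gamma_s$ is a complete hyperbolic (Riemann) surface, which handles the analogue of Step 3. The standard fundamental domain $F(\Gamma_s)=\bigcap_{k\in I_s}\overline{\hat C(f_k)}$ now differs from $F(\Gamma_{m,s})$ only in that the small half-circles $C(g_{k,n}),C(h_{k,n})$ and their inverses have been removed, so the vertical strips between the $C(f_t^{\pm 1})$ inside the horizontal band $M=\{0<\mathrm{Im}(z)<1\}$ are no longer punctured by infinitely many small half-circles.

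For Step 4, I would reuse verbatim the exhaustion $\{K_l\}_{l\in\mathbb N}$ of $\mathbb{H}$ from Construction \ref{cons:3.7} and the partition of $\mathbb{H}\setminus K_l$ into the open sets $U_1,\ldots,U_s$. Setting $C_{t,l}:=\pi((U_t\setminus K_l)\cap F(\Gamma_s))$, the same connectedness and disjointness argument (which only used the pairwise-disjointness of the $U_t$ and the fact that $F(\Gamma_s)$ has compact boundary inside each $U_t\setminus K_l$) shows $S_s\setminus\pi(K_l\cap F(\Gamma_s))=\bigsqcup_{t=1}^{s}C_{t,l}$, so by Remark \ref{r:2.4} the surface $S_s$ has exactly $s$ ends.

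Finally, for the genus, the point is that the subsurfaces $C_{t,l}$ are now all planar: inside $F(\Gamma_s)$ no identifications occur in the strips $M_{k,n},\hat M_{k,n}$ (the transformations $g_{k,n},h_{k,n}$ have been suppressed), so each $C_{t,l}$ is homeomorphic to an open cylinder. In particular $S_s$ contains no compact subsurface of positive genus, so its genus is zero; alternatively, cutting along the geodesic boundary components $\pi(C(f_t))$ of $F(\Gamma_s)$ yields an embedded planar region whose complement in $S_s$ is a disjoint union of cylindrical collars, which rules out any handle. The main (though routine) point to make clean here is that absence of the small half-circles in $\mathcal{C}$ really does eliminate the torus-with-two-punctures pieces $S_k$ used in Step 5 of Theorem \ref{T:1}. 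The statement about the fundamental group then follows from the Uniformization Theorem exactly as in the corollary after Theorem \ref{T:1}, since $\Gamma_s$ acts freely and properly discontinuously on the simply connected space $\mathbb{H}$ and $S_s=\mathbb{H}/\Gamma_s$, so $\pi_1(S_s)\cong\Gamma_s$.
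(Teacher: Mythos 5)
Your proposal is correct and follows exactly the route the paper intends: the corollary is stated as an immediate consequence of the construction in Theorem \ref{T:1}, obtained by restricting to the generators $f_t^{\pm 1}$, reusing the exhaustion of Construction \ref{cons:3.7} and the sets $U_t$ to count $s$ ends, and observing that without the transformations $g_{k,n}$, $h_{k,n}$ no handles are created, so the quotient is planar. The final identification $\pi_1(S_s)\cong\Gamma_s$ via the Uniformization Theorem matches the paper's corollary after Theorem \ref{T:1}.
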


\begin{obs}
Following the same ideas of the proof of Theorem \ref{T:1} it is easy to check that the Rieman surface $\mathbb{H}/\Gamma$ has ends space homeomorphic to the closure of the set $\{\frac{1}{n}:n \in \mathbb{N}\}\subset \mathbb{R}$, where $\Gamma$ is the set generated by $\{f_t, f_t^{-1}:t\in\mathbb{N}\}$ (see equation \ref{eq:12}).
\end{obs}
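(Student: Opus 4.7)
The plan is to mirror the five-step structure of Theorem \ref{T:1}, specialized to the case where the handle-producing generators $g_{k, n}, h_{k, n}$ are absent and the index for the $f_t$'s runs over all of $\mathbb{N}$. First I would verify that $\Gamma$ admits a Schottky description in the sense of Definition \ref{d:2.9}: the isometric circles $C(f_t)$ and $C(f_t^{-1})$ have centers $\pm 5t$ and common radius $1$, so for any two distinct circles in the family the centers differ by at least $5 > 1+1$. Lemma \ref{l:2.6} then applies with a uniform $\epsilon < 1/2$ to produce pairwise disjoint closed hyperbolic $\epsilon$-neighborhoods, and the remaining Schottky conditions are inherited verbatim from Step 2 of Theorem \ref{T:1}. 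Proposition \ref{p:2.11} yields that $\Gamma$ is Fuchsian, and because the circles of $\mathcal{C}$ are pairwise disjoint the fixed-point set $W$ is empty; the argument of Step 3 of Theorem \ref{T:1} then produces a geodesically complete Riemann surface $S := \mathbb{H}/\Gamma$.

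Next I would build an exhaustion modelled on Construction \ref{cons:3.7}. Take
\begin{equation*}
K_l := \{z \in \mathbb{H} :\, -5l - 1 \leq Re(z) \leq 5l + 1,\ 1/l \leq Im(z) \leq l+1\},
\end{equation*}
so that $K_l$ encloses the circles $C(f_1), \ldots, C(f_l)$ together with their mirror images. Set $\mathcal{K}_l := \pi(K_l \cap F(\Gamma))$; then $\{\mathcal{K}_l\}_{l \in \mathbb{N}}$ is an exhaustion of $S$ by compact subsets. Following the component analysis of Step 4 of Theorem \ref{T:1}, the complement $S \setminus \mathcal{K}_l$ has exactly $l+1$ connected components: $l$ bounded ``channel'' components $C_{1,l}, \ldots, C_{l,l}$, where $C_{1,l}$ corresponds to the central boundary arc $(-4, 4)$ of $F(\Gamma)$ (whose endpoints are identified by $f_1$) and $C_{t,l}$ for $t \in \{2, \ldots, l\}$ corresponds to the pair of arcs $(5(t-1)+1,\, 5t-1)$ and $(-5t+1,\, -5(t-1)-1)$ glued by $f_{t-1}$ and $f_t$; plus one unbounded ``tail'' component $C_{\infty, l}$ built from the upper region $Im(z) > l+1$ together with the two lateral regions $|Re(z)| > 5l+1$, which are joined through paths in $F(\Gamma) \setminus K_l$ that stay outside every isometric circle. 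Each channel is cylindrical, so $S$ has genus zero, and the nested inclusions $C_{t,l} \supseteq C_{t, l+1}$ and $C_{\infty, l} \supseteq C_{\infty, l+1}$ define ends $e_t := [C_{t,l}]_{l \in \mathbb{N}}$ for $t \in \mathbb{N}$ and $e_\infty := [C_{\infty, l}]_{l \in \mathbb{N}}$, exhausting $Ends(S)$.

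The topology of $Ends(S)$ now reads off directly: the basic open neighborhood $(C_{\infty, l})^*$ of $e_\infty$ contains $e_t$ for every $t > l$ and misses only the finitely many $e_1, \ldots, e_l$, while for each fixed $t$ the set $(C_{t, l})^*$ is the singleton $\{e_t\}$ once $l \geq t$. Hence $e_t \to e_\infty$ as $t \to \infty$ and every $e_t$ with $t \in \mathbb{N}$ is isolated, which is precisely the topology of $\{0\} \cup \{1/n : n \in \mathbb{N}\}$. The main obstacle is the component count in the middle step: one must verify both that the Schottky identifications $f_t$ with $t > l$ do not disconnect the tail (they only insert cylindrical neighborhoods of the further ends $e_{l+1}, e_{l+2}, \ldots$) and that none of them merges two of the finite channels. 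Both reduce to elementary observations about the position of the isometric circles relative to $K_l$.
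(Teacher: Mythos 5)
Your proposal is correct and carries out exactly the strategy the paper gestures at: a Schottky description via Lemma \ref{l:2.6} (centers $\pm 5t$, radius $1$, gaps $\geq 5 > 2$), an exhaustion by boxes $K_l$, and the count of $l+1$ components of $S\setminus\mathcal{K}_l$, with the isolated ends $e_t$ coming from the glued gap pairs $\bigl(5(t-1)+1,\,5t-1\bigr)$ and $\bigl(-5t+1,\,-5(t-1)-1\bigr)$ (and the self-glued central gap $(-4,4)$) and the accumulation end $e_\infty$ coming from the tail component that still contains every channel with $t>l$. The identification pattern you use is right ($f_t$ sends $5t\pm 1$ to $-5t\mp 1$, so each gap pair closes up into an annulus), and the resulting end space — countably many isolated points converging to one limit — is indeed $\overline{\{1/n : n\in\mathbb{N}\}}$.
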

\section*{Acknowledgements}
The second author was partially supported by UNIVERSIDAD NACIONAL DE COLOMBIA, SEDE MANIZALES. Camilo Ram\'irez Maluendas have dedicated this work to his beautiful family: Marbella and Emilio, in appreciation of their love and support.


\begin{bibdiv}
 \begin{biblist}

\bib{Abi}{article}{
   author={Abikoff, William},
   title={The uniformization theorem},
   journal={Amer. Math. Monthly},
   volume={88},
   date={1981},
   number={8},
   pages={574-592},
   issn={0002-9890},
}


\bib{AyC}{article}{
   author={Arredondo, John A.},
   author={Ram\'\i rez Maluendas, Camilo},
   title={On the infinite Loch Ness monster},
   journal={Comment. Math. Univ. Carolin.},
   volume={58},
   date={2017},
   number={4},
   pages={465--479},
}

\bib{AVR}{article}{
   author={Arredondo, John A.},
   author={Maluendas, Camilo Ram\'\i rez},
   author={Valdez, Ferr\'an},
   title={On the topology of infinite regular and chiral maps},
   journal={Discrete Math.},
   volume={340},
   date={2017},
   number={6},
   pages={1180--1186},
}

\bib{Bear1}{book}{
   author={Beardon, Alan F.},
   title={A Premier on Riemann Surfaces},
   series={London Mathematical Society Lecture Note Series},
   volume={78},
   publisher={Cambridge University Press, Cambridge},
   date={1984},
   pages={x+188},
   isbn={0-521-27104-5},
}

\bib{Bear}{book}{
   author={Beardon, Alan F.},
   title={The geometry of discrete groups},
   series={Graduate Texts in Mathematics},
   volume={91},
   publisher={Springer-Verlag, New York},
   date={1983},
   pages={xii+337},
   isbn={0-387-90788-2},
}

\bib{BJ}{article}{
   author={Button, Jack},
   title={All Fuchsian Schottky groups are classical Schottky groups},
   conference={
      title={The Epstein birthday schrift},
   },
   book={
      series={Geom. Topol. Monogr.},
      volume={1},
      publisher={Geom. Topol. Publ., Coventry},
   },
   date={1998},
   pages={117--125 (electronic)},
}

\bib{Carne}{book}{
       author={Carne, T. K},
       title={Geometry and groups},
        publisher={Cambridge University (electronic)},
        date={2012},
}


\bib{Far}{book}{
   author={Farkas, H. M.},
   author={Kra, I}
   title={Riemann Surfaces},
   series={Graduate Texts in Mathematics},
   volume={71},
   edition={2},
   publisher={Springer-Verlag, New York},
   date={1992},
   pages={xvi+363},
   isbn={0-387-97703-1},
   doi={10.1007/978-1-4612-2034-3},
}
	
\bib{Ford}{article}{
   author={Ford, L. R.},
   title={The fundamental region for a Fuchsian group},
   journal={Bull. Amer. Math. Soc.},
   volume={31},
   date={1925},
   number={9-10},
   pages={531--539},
}

\bib{Fre}{article}{
   author={Freudenthal, Hans},
   title={\"Uber die Enden topologischer R\"aume und Gruppen},
   language={German},
   journal={Math. Z.},
   volume={33},
   date={1931},
   number={1},
   pages={692--713},
}

\bib{Ghys}{article}{
   author={Ghys, {\'E}tienne},
   title={Topologie des feuilles g\'en\'eriques},
   language={French},
   journal={Ann. of Math. (2)},
   volume={141},
   date={1995},
   number={2},
   pages={387--422},
}




\bib{KS}{book}{
   author={Katok, Svetlana},
   title={Fuchsian groups},
   series={Chicago Lectures in Mathematics},
   publisher={University of Chicago Press, Chicago, IL},
   date={1992},
   pages={x+175},
}

\bib{KS2}{article}{
   author={Katok, Svetlana},
   title={Fuchsian groups, geodesic flows on surfaces of constant negative
   curvature and symbolic coding of geodesics},
   conference={
      title={Homogeneous flows, moduli spaces and arithmetic},
   },
   book={
      series={Clay Math. Proc.},
      volume={10},
      publisher={Amer. Math. Soc., Providence, RI},
   },
   date={2010},
   pages={243--320},
}

\bib{Ker}{book}{
   author={Ker\'ekj\'art\'o, B\'ela.},
   title={Vorlesungen \"uber Topologie I},
   series={Mathematics: Theory \& Applications},
   publisher={Springer},
   place={Berl\'in},
   date={1923},
   }

\bib{LJ}{book}{
   author={Lee, John M.},
   title={Introduction to topological manifolds},
   series={Graduate Texts in Mathematics},
   volume={202},
   publisher={Springer-Verlag, New York},
   date={2000},
   pages={xviii+385},
}

\bib{LKTR}{article}{
   author={Lindsey, Kathryn},
   author={Trevi\~no, Rodrigo},
   title={Infinite type flat surface models of ergodic systems},
   journal={Discrete Contin. Dyn. Syst.},
   volume={36},
   date={2016},
   number={10},
   pages={5509--5553},
}

\bib{TM}{article}{
   author={Maitani, Fumio},
   author={Taniguchi, Masahiko},
   title={A condition for a circle domain and an infinitely generated
   classical Schottky group},
   conference={
      title={Topics in finite or infinite dimensional complex analysis},
   },
   book={
      publisher={Tohoku University Press, Sendai},
   },
   date={2013},
   pages={169--175},
}

\bib{MB}{book}{
   author={Maskit, Bernard},
   title={Kleinian groups},
   series={Grundlehren der Mathematischen Wissenschaften [Fundamental
   Principles of Mathematical Sciences]},
   volume={287},
   publisher={Springer-Verlag, Berlin},
   date={1988},
   pages={xiv+326},
}

\bib{MatKat}{article}{
   author={Matsuzaki, Katsuhiko},
   title={Dynamics of Teichm\"uller modular groups and topology of moduli
   spaces of Riemann surfaces of infinite type},
   journal={Groups Geom. Dyn.},
   volume={12},
   date={2018},
   number={1},
   pages={1--64},
}

\bib{PSul}{article}{
   author={Phillips, Anthony},
   author={Sullivan, Dennis},
   title={Geometry of leaves},
   journal={Topology},
   volume={20},
   date={1981},
   number={2},
   pages={209--218},
}




\bib{Ian}{article}{
   author={Richards, Ian},
   title={On the classification of noncompact surfaces},
   journal={Trans. Amer. Math. Soc.},
   volume={106},
   date={1963},
   pages={259--269},
}

\bib{SPE}{article}{
   author={Specker, Ernst},
   title={Die erste Cohomologiegruppe von \"Uberlagerungen und
   Homotopie-Eigenschaften dreidimensionaler Mannigfaltigkeiten},
   language={German},
   journal={Comment. Math. Helv.},
   volume={23},
   date={1949},
   pages={303--333},
}

\bib{Sch}{book}{
   author={Schwartz, Richard},
   title={Mostly Surfaces},
   publisher={AMS, Volume 60},
   date={2011},
}

\bib{ValRa}{article}{
   author={Ram\'\i rez Maluendas, Camilo},
   author={Valdez, Ferr\'an},
   title={Veech groups of infinite-genus surfaces},
   journal={Algebr. Geom. Topol.},
   volume={17},
   date={2017},
   number={1},
   pages={529--560},
}


\bib{Ziel}{book}{
    author={Zielicz, Anna M.},
    title={Geometry and dynamics of infinitely generated Kleinian groups-Geometrics Schottky groups},
    series={PhD Dissertation},
    publisher={Universit\"{a}t Bremen},
    date={2015},
}
\end{biblist}
\end{bibdiv}

\end{document}